\newtheorem{theorem}{Theorem}
\theoremstyle{plain}
\newtheorem{lemma}[theorem]{Lemma}
\newtheorem{corollary}[theorem]{Corollary}
\newtheorem{remark}[theorem]{Remark}
\numberwithin{equation}{section}
\numberwithin{theorem}{section}
\newcommand{\cF}{\mathcal{F}}
\newcommand{\E}{\mathbb{E}}
\newcommand{\R}{\mathbb{R}}
\newcommand{\F}{\mathbb{F}}
\newcommand{\bP}{\mathbb{P}} 
\def\Ito{It\^{o} }
\numberwithin{equation}{section}
\journal{arXiv}
\begin{document}
	\begin{frontmatter}	
		\title{Quantitative uniqueness estimates for stochastic parabolic equations on the whole Euclidean space\footnote{The first two authors are supported by the National Natural Science Foundation of China under grant 11871478, the Science Technology Foundation of Hunan Province. The last two authors is supported by the National Natural Science Foundation of China under grant 11971363, and by the Fundamental Research Funds for the Central Universities under grant 2042023kf0193.}
			\tnoteref{mytitlenote}}
		
		\author[myfirstaddress]{Yuanhang Liu}
		\ead{liuyuanhang97@163.com}
		
		\author[mysecondaryaddress]{Donghui Yang}
		\ead{donghyang@outlook.com}
		
		\medskip
		
		\author[my3address]{Xingwu Zeng \corref{mycorrespondingauthor}}
		\cortext[mycorrespondingauthor]{Corresponding author}
	\ead{xingwuzeng@whu.edu.cn}
\medskip
\author[my4address]{Can Zhang}
\ead{zhangcansx@163.com}

\address[myfirstaddress]{School of Mathematics and Statistics, Central South University, Changsha 410083, China.}
\address[mysecondaryaddress]{School of Mathematics and Statistics, Central South University, Changsha 410083, China.}
\address[my3address]{School of Mathematics and Statistics, Wuhan University, Wuhan 430072, China}
\address[my4address]{School of Mathematics and Statistics, Wuhan University, Wuhan 430072, China}
		\begin{abstract}
			In this paper, a quantitative estimate of unique continuation for the stochastic heat equation with bounded potentials on the whole Euclidean space is established. This paper generalizes the earlier results in \cite{zhang2008unique} and \cite{lu2015unique} from a bounded domain to an unbounded one. 
			The proof is based on the locally parabolic-type frequency function method. An observability estimate from measurable sets in time for the same equation is also derived.
		\end{abstract}
	\begin{keyword}
		Stochastic parabolic equation, unique continuation, unbounded domain
		
		\medskip
		
		\MSC[2020]  60H15, 93B05
	\end{keyword}
	\end{frontmatter}

\tableofcontents

\section{Introduction}
The study of unique continuation for solutions to deterministic partial differential equations comes from the classical Cauchy-Kovalevskaya theorem (see, e.g., \cite{zuily1983uniqueness}).  Besides in the theory of partial differential equations, it is of great significance in both Inverse Problem and Control Theory (see, for instance, \cite{lavrent_ev1986ill,LiYong,zuazua2007controllability}). The classical unique continuation property is of a qualitative nature, ensuring that the solution within a given domain can be uniquely determined by its value within a suitable subdomain. After establishing the unique continuation property, a natural question arises: Can one develop a method to recover the solution within the domain only based on the values of the solution within the subdomain? The ill-posedness of the non-characteristic Cauchy problem is widely known, indicating that a minor error in the data within the subdomain can lead to uncontrollable ramifications on the solution within the domain (see, for example, \cite{hadamard1923lectures}). Hence, the stability estimate for the solution is of importance. For an introduction to this subject, we refer the reader to \cite{lavrent_ev1986ill}.

There are rich references addressing to unique continuation not only for deterministic parabolic equations (see, e.g., \cite{escauriaza2000carleman,escauriaza2001carleman,lin1990uniqueness,phung2010quantitative,phung2014bang,poon1996unique}), but also for the stochastic counterpart in bounded domains. The
result in \cite{zhang2008unique} first showed that a solution to the stochastic parabolic equation (without boundary condition)
evolving in a bounded domain $G\subset\R^N  (N \in \mathbb{N} )$ would vanish identically $\bP$-$a.s.$, provided that it vanishes in $G_0 \times (0, T )$, $\bP$-$a.s.$, where $G_0\subseteq G$. In \cite{Lu_2012}, the author obtained an interpolation inequality for stochastic parabolic equations by Carleman estimates, which implied a conditional stability result for stochastic parabolic equations.
In \cite{li2013quantitative}, the
authors proved that a solution to the stochastic parabolic equation (with a partial homogeneous Dirichlet boundary
condition on arbitrary open subset $\Gamma_0$ of $\partial G$) evolving in $G$ vanishes $\bP$-$a.s.$, provided that its normal
derivative equals zero in $\Gamma_0\times(0, T )$, $\bP$-$a.s.$ 
In \cite{lu2015unique}, the authors established a unique continuation property for stochastic parabolic equations evolving in a domain $G \subset \mathbb{R}^N$. They demonstrated that the solution can be uniquely determined based on its values on any open subdomain of $G$ at each single point of time. Moreover, when $G$ is convex and bounded, they also provided a quantitative version of unique continuation. 
In \cite{fernandez2020hardy}, the authors proved a qualitative unique continuation at two points in time for a stochastic parabolic equation with a randomly perturbed potential. This result can be considered as a variant of Hardy's uncertainty principle for stochastic parabolic evolutions. In \cite{LuYin2020}, the authors proved a local unique continuation property for stochastic hyperbolic equations without boundary conditions to solve a local state observation problem.

More recently, in \cite{liao2023stability}, the authors established a two-ball and one-cylinder inequality based on a new Carleman estimate with both time and space boundary observation terms for the stochastic parabolic equations in a bounded domain, see \cite[Section 3]{liao2023stability} for more details. They utilized these quantitative unique continuation properties to obtain the stability estimate for the determination of the unknown time-varying boundaries.

The unique continuation estimate for deterministic partial differential equations in an unbounded domain has been also widely studied over the last decade.  In \cite{ZHANG2016389}, the author proved a unique continuation estimate for the Kolmogorov equation in the whole space by a spectral inequality and a decay inequality on the Fourier transform of the solution. In \cite{WANG2019144}, the authors proved that the unique continuation estimate for the pure heat equation in $ \mathbb{R}^n $ holds if and only if the unbounded observable set is thick set. In \cite{duan2020observability, refId0}, the authors proved a global interpolation inequality for solutions of the heat equation with
bounded potential at one point of time variable using the parabolic-type frequency function method. In \cite{WangMingZhangcan2023}, the authors proved a H\"older-type interpolation inequalities of unique continuation for fractional order parabolic equations with space-time dependent potentials on a thick set. However, to the best of our knowledge, the question of the unique continuation estimate in an unbounded domain for the stochastic counterpart is still open. 

The observability inequality for stochastic parabolic equations on a bounded domain has been extensively studied over the past decades. In the case that the observation time is the entire time interval and the observation spatial region is a nonempty open subset, we refer the reader to \cite{tang2009null} and the references therein. In those works, the proofs are almost based on the method of Carleman estimates. Alternatively, when the observation time region constitutes only a subset of positive Lebesgue measure within the time interval, and the observation spatial region is a nonempty open subset, we refer the reader to \cite{lu2011some}. In a more general context, when the observation subdomain constitutes a measurable subset of positive measure in both space and time variables, we refer the reader to \cite{yang2016observability}. There are few existing results on the observability inequality for stochastic parabolic equations in an unbounded domain.

The main contribution of this paper is that we establish the quantitative estimate of unique continuation for the stochastic heat equation with bounded and time-dependent potentials on the whole space, by using the locally parabolic-type frequency function method. More precisely, we prove a H\"older-type interpolation inequality for stochastic parabolic equations (see Theorem \ref{2.3} below), which extends a result already given in \cite[Theorem 1.6]{lu2015unique} from bounded to unbounded domains. This result seems to be discussed for the first time.  As a direct application, we obtain an observability inequality from measurable sets in time for the stochastic parabolic equation.

We remark that the parabolic-type frequency function method has been well developed in \cite[Theorem 6]{apraiz2014observability}, \cite[Lemma 5]{phung2014bang}, and \cite[Theorem 1.6]{lu2015unique} for the deterministic case. In this paper, we first employ the parabolic frequency function method to derive a locally quantitative estimate of unique continuation for the stochastic heat equation with a bounded potential, where we carefully quantify the dependence of the constant on the $L^\infty$-norm of the involved potentials. Next, by the aforementioned local result and the geometry of the observation subdomains, we obtain a globally quantitative estimate at a single time point for the solutions of the stochastic heat equation with bounded potentials. Finally, we employ the telescoping method to establish the observability inequality. 

The rest  of this paper is organized as follows. Section 2 provides the formulation of the primary problem
and states the main result Theorem \ref{2.3}. In Section 3, we introduce several auxiliary lemmas, which are instrumental in proving our main theorem. Section 4 is dedicated to the proof of Theorem \ref{2.3}, while Section 5 focuses on deriving the observability inequality, i.e., Corollary \ref{Thm1}.

\section{Problem formulation and main result}
Let $(\Omega,\mathcal{F},\mathbb{F}, \bP)$ with $\mathbb{F}\triangleq\left\{\mathcal{F}_t\right\}_{t\geq 0}$ be a complete filtered probability space on which a one dimensional standard Brownian motion $\{W(t)\}_{t\geq0}$ is defined.

Let $ T>0 $ and $H$ and $V$ be two separable Hilbert spaces with inner products $\langle\cdot,\cdot\rangle_H$, $\langle\cdot,\cdot\rangle_V$ and norms $\|\cdot\|_H$, $\|\cdot\|_V$, respectively. 
\begin{itemize}
	 \item By $L^2_{\mathcal{F}_t}(\Omega;H)$, $t \geq 0$, $p \in [1,\infty)$, we denote the space consisting of all $H$-valued,  $\mathcal{F}_t$-measurable random variables $ \xi $ such that $ \mathbb{E}\|\xi\|^2_{H}<+\infty $.
	 \item By $L^p_{\mathbb{F}}(\Omega; L^q(0,T;H))$, $p,q\in[1,\infty)$, we denote the space consisting of all $H$-valued, $\mathbb{F}$-adapted processes $X(\cdot)$ such that $ \mathbb{E} \|X(\cdot)\|_{L^q(0,T;H)}^{p}<+\infty $.
	 \item By $L^\infty_{\mathbb{F}}(0,T;V)$, we denote the space consisting of all $V$-valued, $\mathbb{F}$-adapted bounded processes.
	 \item By $L^q_{\mathbb{F}}(\Omega;C([0,T];H))$, $q\in[1,\infty)$, we denote the space consisting of all $H$-valued, $\mathbb{F}$-adapted continuous processes $X(\cdot)$ such that $ \mathbb{E}\|X(\cdot)\|^q_{C([0;T];H)}<+\infty $.
\end{itemize}
In the sequel, we  simply denote $ L^p_{\mathbb{F}}(\Omega; L^p(0,T;H))$ by $L^p_{\mathbb{F}}(0,T;H)$ with $p\in[1,\infty)$.
All the above spaces are equipped with the canonical
quasi-norms.

We consider the following stochastic heat equation with a time and space dependent potential on the whole Euclidean space
\begin{equation}\label{1.1}
	\left\{ \begin{array}{lll}
		\mathrm{d}\varphi-\Delta\varphi \mathrm{d}t=a\varphi \mathrm{d}t+ b\varphi \mathrm{d}W(t),    &\mathrm{in}\  \mathbb{R}^{N}\times (0,+\infty),\\
		\varphi(0)=\varphi_{0},  &\mathrm{in}\ \mathbb{R}^{N},
	\end{array}\right.\end{equation}
where $\varphi_{0}\in L^2_{\cF_0}(\Omega;L^2(\mathbb{R}^{N}))$, $a \in L^\infty_{\F}(0,+\infty;L^\infty(\R^N))$ and $b \in L^\infty_\F(0,+\infty;W^{1,\infty}(\R^N))$. The well-posedness of stochastic evolution equations is well-known (see e.g., \cite[Theorem 3.14]{lv2021mathematical}), and the equation (\ref{1.1}) admits a unique solution $\varphi\in L^2_{\mathbb{F}}(\Omega;C([0,T];L^2(\R^N))) \cap L^2_{\mathbb{F}}(0,T;H^1(\R^N))$.

Here and throughout this paper,  for $r>0$ and $x_{0}\in \mathbb{R}^{N}$, we use
$B_{r}(x_{0})$ to denote the closed ball centered  at $x_{0}$  and of radius $r$; and
$Q_{r}(x_{0})$ to denote the smallest cube centered at $x_0$ so that $B_{r}(x_{0})\subset Q_{r}(x_{0})$.
Let $\mathrm{int}(Q_{r}(x_{0}))$ be the interior of $Q_{r}(x_{0})$.
Write $\|a\|_{\infty}\triangleq\|a\|_{L^\infty_{\F}(0,+\infty;L^\infty(\R^N))}$ and $\|b\|_{\infty}\triangleq\|b\|_{L^\infty_\F(0,+\infty;W^{1,\infty}(\R^N))}$.
We always denote by $C(\cdot)$ a generic positive constant depending on what are enclosed in the brackets.

The main result of this paper can be stated as follows.

\begin{theorem}\label{2.3} Let $0<r<R<+\infty$ and $ T>0$.
	Assume that there is a sequence $\{x_i\}_{i\geq1}\subset\mathbb R^N$ so that
	\begin{equation*}\mathbb{R}^{N}=\bigcup_{i\geq1}Q_{R}(x_{i})
		\quad \text{with}\quad \mathrm{int}(Q_{R}(x_{i}))\bigcap \mathrm{int}(Q_{R}(x_{j}))=\emptyset\quad \text{for each}\quad i\neq j\in\mathbb N.
	\end{equation*}
	Let $$\omega:=\bigcup_{i\geq1}\omega_{i}
	\quad\text{with} \ \  \omega_{i} \ \text{being an open set and }
	\ B_{r}(x_{i})\subset \omega_{i} \subset B_{R}(x_{i})\quad\text{for each}\quad i\in\mathbb N.$$ Then there are two constants $C:= C(R)>0$ and
	$\theta:=\theta(r,R)\in (0,1)$ such that for any $\varphi_{0}\in L_{\cF_0}^{2}(\Omega;L^2 (\mathbb{R}^{N}))$,
	the corresponding solution $\varphi$ of (\ref{1.1}) satisfies
	\begin{equation}\label{3.33333}
		\begin{split}
			\E\int_{\mathbb{R}^{N}}|\varphi(x,T)|^{2}\mathrm{d}x&\leq e^{C
				\left(T^{-1} +T +T (\|a\|_{\infty}+\|b\|_{\infty}^2)+\|a\|_{\infty}^{2/3}+\|b\|_{\infty}^2+1\right)}
			\left(\E\int_{\mathbb{R}^{N}}|\varphi_{0}(x)|^{2}\mathrm{d}x\right)^{\theta}\\
			 &\times\left(\E\int_{\omega}|\varphi(x,T)|^{2}\mathrm{d}x\right)^{1-\theta}.     
		\end{split}
	\end{equation}
\end{theorem}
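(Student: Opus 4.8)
The plan is to reduce the global inequality \eqref{3.33333} to a \emph{local} H\"older interpolation inequality attached to a single point $x_0\in\R^N$, and then to glue the local estimates over the centers $\{x_i\}$ using the geometry of the covering. For a parameter $h>0$ to be optimized at the end, introduce the shifted backward heat kernel
\begin{equation*}
  G(x,t) = (T-t+h)^{-N/2}\exp\!\left(-\frac{|x-x_0|^2}{4(T-t+h)}\right), \qquad (x,t)\in\R^N\times[0,T],
\end{equation*}
which solves $\partial_t G + \Delta G = 0$, and set
\begin{equation*}
  H(t) = \E\int_{\R^N}|\varphi(x,t)|^2 G(x,t)\,\mathrm{d}x, \quad D(t)=\E\int_{\R^N}|\nabla\varphi(x,t)|^2 G(x,t)\,\mathrm{d}x, \quad N(t)=\frac{(T-t+h)D(t)}{H(t)}.
\end{equation*}
The Gaussian decay of $G$ forces all these integrals to converge for $\varphi$ in the well-posedness class $L^2_\F(\Omega;C([0,T];L^2(\R^N)))\cap L^2_\F(0,T;H^1(\R^N))$; this decay is what replaces the compact support and boundary terms available in the bounded-domain arguments of \cite{lu2015unique}.

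Next I would differentiate $H$ and $N$ in time. Applying the It\^o formula to $|\varphi|^2$ and integrating against $G$, the backward-heat identity $\partial_t G=-\Delta G$ makes the weighted-gradient cross terms cancel, yielding $\frac{d}{dt}\log H(t) = -2N(t)/(T-t+h)$ up to remainders; after taking expectations the martingale part drops and the stochastic term leaves the It\^o correction $\tfrac12 b^2|\varphi|^2$ together with the first-order contribution of $b\,\nabla\varphi$, while the potential $a$ contributes a zeroth-order remainder of size $\|a\|_\infty$. Differentiating the frequency and exploiting the virial identity for the Gaussian weight, I expect to show that $t\mapsto e^{C_1 t}\bigl(N(t)+C_2\bigr)$ is monotone, with $C_1,C_2$ depending polynomially on $\|a\|_\infty$ and $\|b\|_\infty$; the sharp bookkeeping of these constants is what produces the exponent $T^{-1}+T+T(\|a\|_\infty+\|b\|_\infty^2)+\|a\|_\infty^{2/3}+\|b\|_\infty^2+1$. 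This almost-monotonicity yields a log-convexity (doubling) estimate for $H$, and comparing $H(T)$, whose weight $G(\cdot,T)=h^{-N/2}e^{-|\cdot-x_0|^2/4h}$ concentrates at scale $\sqrt h$ near $x_0$, against $H(0)$, whose weight spreads at scale $\sqrt{T+h}$, I would extract the local inequality
\begin{equation*}
  \E\int_{Q_R(x_0)}|\varphi(x,T)|^2\,\mathrm{d}x \le e^{C(\ldots)}\left(\E\int_{\R^N}|\varphi_0(x)|^2 G(x,0)\,\mathrm{d}x\right)^{\theta}\left(\E\int_{B_r(x_0)}|\varphi(x,T)|^2\,\mathrm{d}x\right)^{1-\theta},
\end{equation*}
where I used $G(\cdot,T)\ge h^{-N/2}e^{-NR^2/4h}$ on $Q_R(x_0)\subset B_{R\sqrt N}(x_0)$ to replace $H(T)$ by the cube integral. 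The essential point is that $C$ and $\theta$ depend only on $r,R$ (and $N$), \emph{not} on $x_0$.

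Then I would globalize by applying the local inequality with $x_0=x_i$ and summing over $i$. Writing $G_i$ for the weight centered at $x_i$, set $A_i=\E\int_{\R^N}|\varphi_0|^2 G_i(x,0)\,\mathrm{d}x$, $B_i=\E\int_{\omega_i}|\varphi(x,T)|^2\,\mathrm{d}x$ (legitimate since $B_r(x_i)\subset\omega_i$), and $P_i=\E\int_{Q_R(x_i)}|\varphi(x,T)|^2\,\mathrm{d}x$. Because the cubes $Q_R(x_i)$ tile $\R^N$ with pairwise disjoint interiors, $\sum_i P_i = \E\int_{\R^N}|\varphi(x,T)|^2\,\mathrm{d}x$. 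The discrete H\"older inequality with exponents $1/\theta$ and $1/(1-\theta)$ gives $\sum_i A_i^\theta B_i^{1-\theta}\le(\sum_i A_i)^\theta(\sum_i B_i)^{1-\theta}$. Finally, the $x_i$ are $\sim R$-separated, so $\sum_i G_i(\cdot,0)\le C(N,R,T,h)$ uniformly, whence $\sum_i A_i\le C\,\E\int_{\R^N}|\varphi_0|^2$; and the balls $B_R(x_i)\supset\omega_i$ have bounded overlap, whence $\sum_i B_i\le C(N)\,\E\int_\omega|\varphi(x,T)|^2\,\mathrm{d}x$. Combining these four facts with the uniform local constant yields precisely \eqref{3.33333}.

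The main obstacle is the frequency computation of the second step: justifying the differentiation of $H$ and $N$ under the integral sign on the unbounded domain, where one cannot integrate by parts freely and must instead rely on the Gaussian decay and, very likely, on a smooth-approximation or cutoff argument; correctly processing the It\^o correction and the drift $b\,\nabla\varphi$ through the expectation; and—most delicately—tracking the dependence of $C_1,C_2$ on $\|a\|_\infty$ and $\|b\|_\infty$ sharply enough to land on the stated exponent. By contrast, the gluing of the third step should be a soft consequence of the tiling, the uniform summability of the shifted Gaussians, and the bounded overlap of the sets $\omega_i$.
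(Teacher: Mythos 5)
Your global architecture (a local H\"older inequality at each center $x_i$, then gluing over the tiling) matches the paper's, and your gluing step is sound --- in fact your discrete H\"older argument, combined with the disjointness of the balls $B_R(x_i)$ and the uniform summability of the shifted Gaussians, is arguably cleaner than the paper's Young's-inequality split-and-recombine followed by the energy estimate \eqref{energy}. The genuine gap is in the local step, which is the heart of the matter. Almost-monotonicity of the frequency plus log-convexity of $H$ only yields three-point inequalities of the form $H(t)\le e^{C}H(0)^{1-\mu}H(T)^{\mu}$; to extract your claimed local inequality you must convert the endpoint factor $H(T)$ into the small-ball quantity $\E\int_{B_r(x_0)}|\varphi(x,T)|^{2}\mathrm{d}x$, and the Gaussian concentration of $G(\cdot,T)$ at scale $\sqrt h$ does not do this by itself: it leaves a tail term of size $h^{-N/2}e^{-r^{2}/4h}\,\E\int_{\R^N}|\varphi(x,T)|^{2}\mathrm{d}x$ that you have no stated means to remove. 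Removing it requires an upper bound on the frequency at time $T$ (Escauriaza's concentration argument, Step 3 of the paper's proof of Lemma~\ref{lemma-2.2}), and such a bound \emph{cannot} hold with constants depending only on $r,R,T,\|a\|_{\infty},\|b\|_{\infty}$, as your sketch implicitly assumes: otherwise one would get $\E\int_{B_R(x_0)}|\varphi(x,T)|^{2}\mathrm{d}x\le C\,\E\int_{B_r(x_0)}|\varphi(x,T)|^{2}\mathrm{d}x$ with a universal $C$, which is false already for the free heat equation (launch the data at distance $d$ from $x_0$ and let $d\to\infty$). The frequency at time $T$ can only be bounded by the \emph{logarithm of the ratio} of a global norm of $\varphi$ to a local norm near $x_0$; producing that bound is exactly the role of the paper's Lemma~\ref{lemma-1.3}, whose time scale $h_0$ in \eqref{1.8} is defined through this logarithm, of the Gronwall step \eqref{3.151515}--\eqref{3.171717} with the choices $\varepsilon,\lambda\sim h_0$, and of the final rearrangement in Step 4 (the ratio contains the left-hand side, so it can be absorbed, which is what creates the H\"older exponent $\gamma$). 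None of this machinery --- backward non-degeneracy, ratio-dependent choice of your parameter $h$, endpoint concentration, rearrangement --- appears in your proposal; ``comparing $H(T)$ against $H(0)$'' is precisely the step that cannot be carried out as described.

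Two secondary points. First, you run the frequency function on all of $\R^N$ and treat the justification of differentiation and integration by parts as the main obstacle; this is a real but lesser issue (the paper sidesteps it by localizing $u=\chi\varphi$ with compact support, accepting the commutator terms $F$ in \eqref{u-F}, which are then controlled by Lemma~\ref{lemma-1.3} together with the interior estimates of Lemmas~\ref{lemma-1.1}--\ref{lemma-1.2}). The obstacle that actually sinks the write-up is the missing solution-dependent frequency bound above. Second, a minor correction of expectations: the term $\|a\|_{\infty}^{2/3}$ in \eqref{3.33333} does not arise from bookkeeping in the monotonicity formula; in the paper it is created inside Lemma~\ref{lemma-1.3} by dominating the factor $\|a\|_{\infty}$ by $e^{\|a\|_{\infty}^{2/3}}$, so you should not expect to recover it purely from the virial computation.
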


As an immediate application of the above theorem, an observability inequality from measurable sets in time for the solution of \eqref{1.1} can be derived.
\begin{corollary}\label{Thm1}
Let $E\subset(0,T)$ be a Lebesgue  measurable subset with a positive measure.
Under the assumptions in Theorem \ref{2.3}, there exist positive constants $C=C(r, R)$,
	and $\widetilde{C}=\widetilde{C}(r, R, E)$
	so that for any $\varphi_{0}\in L^2_{\cF_0}(\Omega;L^2(\mathbb{R}^{N}))$, the corresponding solution $\varphi$ of (\ref{1.1}) satisfies
	$$
	\E\int_{\mathbb{R}^N} |\varphi(x,T)|^2\,\mathrm dx\leq
	e^{\widetilde{C}}e^{C\left(T +T (\|a\|_{\infty}+\|b\|_{\infty}^2)+\|a\|_{\infty}^{2/3}+\|b\|^2_{\infty}+1\right)}
	\E\int_{\omega\times E}|\varphi(x,t)|^{2}\mathrm dx\mathrm dt.
	$$
\end{corollary}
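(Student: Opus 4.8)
The plan is to deduce Corollary \ref{Thm1} from the interpolation inequality of Theorem \ref{2.3} by the \emph{telescoping series method}, combining it with a dissipation estimate for \eqref{1.1} and a Lebesgue density point argument for the set $E$. Three preliminary ingredients are needed. First, I would upgrade Theorem \ref{2.3} to an interpolation inequality over an arbitrary subinterval $[s,t]\subset[0,T]$: applying the theorem to the shifted process $\varphi(\cdot+s)$, which solves an equation of the same form with $\cF_s$-measurable datum $\varphi(s)$ and final time $t-s$, yields
\begin{equation*}
	\E\!\int_{\R^N}\!|\varphi(x,t)|^2\,\mathrm dx\le e^{C\Phi(t-s)}\Big(\E\!\int_{\R^N}\!|\varphi(x,s)|^2\,\mathrm dx\Big)^{\theta}\Big(\E\!\int_{\omega}\!|\varphi(x,t)|^2\,\mathrm dx\Big)^{1-\theta},
\end{equation*}
where $\Phi(\tau)=\tau^{-1}+\tau+\tau(\|a\|_\infty+\|b\|_\infty^2)+\|a\|_\infty^{2/3}+\|b\|_\infty^2+1$. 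Second, Young's inequality turns this into an additive form: for every $\epsilon\in(0,1)$,
\begin{equation*}
	\E\!\int_{\R^N}\!|\varphi(x,t)|^2\,\mathrm dx\le \epsilon\,\E\!\int_{\R^N}\!|\varphi(x,s)|^2\,\mathrm dx+\epsilon^{-\frac{\theta}{1-\theta}}e^{C'\Phi(t-s)}\,\E\!\int_{\omega}\!|\varphi(x,t)|^2\,\mathrm dx.
\end{equation*}
Third, applying \Ito formula to $\|\varphi(t)\|_{L^2(\R^N)}^2$ and taking expectation gives the forward dissipation bound $\E\int_{\R^N}|\varphi(x,t)|^2\,\mathrm dx\le e^{(2\|a\|_\infty+\|b\|_\infty^2)(t-s)}\E\int_{\R^N}|\varphi(x,s)|^2\,\mathrm dx$ for $t\ge s$, so the whole-space $L^2$-norm is nonincreasing in time up to this explicit exponential factor.

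With these in hand I would run the telescoping. Let $\ell\in(0,T)$ be a point of Lebesgue density one of $E$, and fix a ratio $\kappa\in(0,1)$ close to $1$; construct a decreasing sequence $\ell_m\downarrow\ell$ whose gaps $g_m:=\ell_m-\ell_{m+1}$ decay geometrically, $g_{m+1}=\kappa g_m$. Discarding finitely many terms, the density property guarantees $|E\cap(\ell_{m+1},\ell_m)|\ge\rho\,g_m$ for a fixed $\rho\in(0,1)$. Writing $d_m:=\E\int_{\R^N}|\varphi(x,\ell_m)|^2\,\mathrm dx$ and $\mathcal O_m:=\int_{E\cap(\ell_{m+1},\ell_m)}\E\int_\omega|\varphi(x,t)|^2\,\mathrm dx\,\mathrm dt$, I would apply the additive inequality on $[\ell_{m+2},t]$ for $t\in E\cap(\ell_{m+1},\ell_m)$ (so that $t-\ell_{m+2}\ge g_{m+1}$ controls the singular factor by $e^{C'\Phi(g_{m+1})}$), integrate in $t$ over $E\cap(\ell_{m+1},\ell_m)$, and use the dissipation bound to replace the left-hand whole-space norm by $d_m$. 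This produces the recursion
\begin{equation*}
	d_m\le \epsilon_m d_{m+2}+\epsilon_m^{-\frac{\theta}{1-\theta}}\,\frac{C}{g_m}\,e^{C''/g_m}\,\mathcal O_m,\qquad m\ge m_0,
\end{equation*}
where $\epsilon_m\in(0,1)$ is free and the bounded factors $e^{Cg_m}$ have been absorbed. Choosing $\epsilon_m=e^{-\sigma/g_m}$ and weights $\beta_m$ with $\beta_{m+2}=\beta_m\epsilon_m$ (so $\beta_m\sim e^{-\sigma\Lambda/g_m}$ with $\Lambda=\Lambda(\kappa)\to\infty$ as $\kappa\uparrow1$, since $\sum_{j<m}g_j^{-1}$ is dominated by its last, largest term), the recursion telescopes along the even and odd subsequences: summing $\beta_md_m-\beta_{m+2}d_{m+2}\le \beta_m\epsilon_m^{-\theta/(1-\theta)}\frac{C}{g_m}e^{C''/g_m}\mathcal O_m$ and noting $\beta_Md_M\to0$ (as $d_M\le e^{CT}\E\|\varphi_0\|^2$ is bounded and $\beta_M\to0$) gives $\beta_{m_0}d_{m_0}\lesssim \sum_m\beta_m\epsilon_m^{-\theta/(1-\theta)}\frac{C}{g_m}e^{C''/g_m}\mathcal O_m$.

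The crux of the argument is this last summation: the interpolation constant $e^{C''/g_m}$ blows up as the gaps shrink toward $\ell$, and one must verify that the accumulated weight beats it. The exponent of the generic coefficient is $\big(-\sigma\Lambda+\sigma\frac{\theta}{1-\theta}+C''\big)/g_m$, so choosing $\kappa$ close enough to $1$ that $\Lambda>\theta/(1-\theta)$ and then $\sigma$ large makes this exponent negative, whence the coefficients are bounded uniformly in $m$ by a constant depending only on $r,R$ and the geometry of $E$. Since $\sum_m\mathcal O_m\le\E\int_{\omega\times E}|\varphi(x,t)|^2\,\mathrm dx\,\mathrm dt$, this yields $d_{m_0}\le e^{\widetilde C}\,\E\int_{\omega\times E}|\varphi|^2$. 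Finally I would transfer $d_{m_0}$ to time $T$ by the dissipation bound, $\E\int_{\R^N}|\varphi(x,T)|^2\,\mathrm dx\le e^{(2\|a\|_\infty+\|b\|_\infty^2)(T-\ell_{m_0})}d_{m_0}$, and collect constants: the $E$-dependent and $g_m^{-1}$-type blow-ups (including the $T^{-1}$ absent from the final bound) are folded into $e^{\widetilde C}=e^{\widetilde C(r,R,E)}$, while the $m$-independent time- and potential-dependent factors from the dissipation and interpolation estimates combine into the stated $e^{C(T+T(\|a\|_\infty+\|b\|_\infty^2)+\|a\|_\infty^{2/3}+\|b\|_\infty^2+1)}$. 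I expect the balancing of $\sigma$, $\kappa$ and $\Lambda$ that keeps the telescoped coefficients bounded to be the main technical obstacle, together with the careful verification that the shift to the subinterval $[s,t]$ preserves the adaptedness structure needed to invoke Theorem \ref{2.3}.
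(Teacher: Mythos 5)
Your proposal is correct and follows essentially the same route as the paper's proof: Theorem \ref{2.3} applied on subintervals combined with Young's inequality and the \Ito energy estimate, a Lebesgue density point with geometrically shrinking intervals, and a weighted telescoping sum over even indices. The only difference is bookkeeping in the telescoping step---the paper tunes $\varepsilon_m=e^{-d\kappa^{m+2}}$ and $\kappa=\sqrt{(\alpha+2)/(\alpha+1)}$ so that the coefficients of the observation terms are exactly constant in $m$, whereas you fix the weights by $\beta_{m+2}=\beta_m\epsilon_m$ and verify uniform boundedness of the coefficients by taking $\kappa$ close to $1$ and $\sigma$ large, with the finitely many early terms absorbed into $e^{\widetilde{C}(r,R,E)}$; this is a minor variation of the same telescoping series method.
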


\medskip

\begin{remark}
Similar results as in Theorem \ref{2.3} and Corollary \ref{Thm1} have been obtained in \cite[Theorem 1.6, Theorem 1.10]{lu2015unique} on a convex and bounded domain. In this paper, we get more sharper estimates and extend them to  the case of unbounded domains. 
\end{remark}

\section{Preliminary lemmas}\label{pre}
In this section, we give three auxiliary results that will be used later.
The first two lemmas are standard estimates for solutions of (\ref{1.1}).
For the sake of completeness we provide their detailed proofs in the Appendix.

\begin{lemma}\label{lemma-1.1}
	There is a constant $C_{1}>1$ so that for any $\varphi_{0}\in L^2_{\cF_0}(\Omega;L^2(\mathbb{R}^{N}))$,
	the solution  $\varphi$ of (\ref{1.1}) satisfies
	\begin{equation}\label{1.2}
		\begin{array}{lll}
			&&\displaystyle{\sup_{t\in[T-\tau_{1},T]}}\E\int_{B_{r}(x_{0})}\varphi^{2}(x,t)\mathrm dx
			+\E\int_{T-\tau_{1}}^{T}\int_{B_{r}(x_{0})}| \nabla\varphi(x,s)|^{2}\mathrm dx\mathrm ds\\
			\\
			&\leq& C_1 \left[(R-r)^{-2}+(\tau_{2}-\tau_{1})^{-1}+\|a\|_{\infty}+\|b\|_{\infty}^2\right]
			\displaystyle{\E\int_{T-\tau_{2}}^{T}\int_{B_{R}(x_{0})}}\varphi^{2}(x,s)\mathrm dx\mathrm ds,\\
			\\
		\end{array}
	\end{equation}
	for all $0<r<R<+\infty$, $0<\tau_{1}<\tau_{2}<T$ and $x_{0}\in \mathbb{R}^{N}$.
\end{lemma}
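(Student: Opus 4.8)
The plan is to prove a Caccioppoli-type energy estimate by localizing the equation in space with a cutoff and applying It\^o's formula to $\varphi^2$. Fix $x_0$ and choose $\eta\in C_c^\infty(\R^N)$ with $0\le\eta\le1$, $\eta\equiv1$ on $B_r(x_0)$, $\mathrm{supp}\,\eta\subset B_R(x_0)$, and $|\nabla\eta|\le C(R-r)^{-1}$. Applying the \Ito formula to $\varphi^2$ for the solution of (\ref{1.1}) gives
\[
\mathrm d(\varphi^2)=\left(2\varphi\Delta\varphi+2a\varphi^2+b^2\varphi^2\right)\mathrm dt+2b\varphi^2\,\mathrm dW(t),
\]
the term $b^2\varphi^2\,\mathrm dt$ being the \Ito correction coming from the quadratic variation of the noise. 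I would then multiply by $\eta^2$ and integrate over $\R^N$.

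Integrating by parts in $\int_{\R^N}\eta^2\varphi\Delta\varphi\,\mathrm dx=-\int_{\R^N}\eta^2|\nabla\varphi|^2\,\mathrm dx-2\int_{\R^N}\eta\varphi\,\nabla\eta\cdot\nabla\varphi\,\mathrm dx$ and using the sharp Young inequality $|4\eta\varphi\,\nabla\eta\cdot\nabla\varphi|\le\eta^2|\nabla\varphi|^2+4\varphi^2|\nabla\eta|^2$ to absorb the cross term, I retain a full negative copy $-\E\int\eta^2|\nabla\varphi|^2$. Taking expectation (the stochastic integral $\int 2b\eta^2\varphi^2\,\mathrm dx\,\mathrm dW$ is a martingale of zero mean) yields the differential inequality
\[
\frac{\mathrm d}{\mathrm dt}\E\int_{\R^N}\eta^2\varphi^2\,\mathrm dx+\E\int_{\R^N}\eta^2|\nabla\varphi|^2\,\mathrm dx\le K\,\E\int_{B_R(x_0)}\varphi^2\,\mathrm dx,
\]
where $K:=C[(R-r)^{-2}+\|a\|_\infty+\|b\|_\infty^2]$ collects the constant from $|\nabla\eta|^2$ and the potentials $2a+b^2$.

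Write $F(t):=\E\int_{\R^N}\eta^2\varphi^2(x,t)\,\mathrm dx$ and $h(t):=\E\int_{B_R(x_0)}\varphi^2(x,t)\,\mathrm dx$, and note $F(s)\le h(s)$. Integrating the inequality from $s$ to $t$ gives, for $s\le t$,
\[
F(t)+\E\int_s^t\int_{\R^N}\eta^2|\nabla\varphi|^2\,\mathrm dx\,\mathrm d\sigma\le F(s)+K\int_s^t h(\sigma)\,\mathrm d\sigma.
\]
The remaining task is to remove the term $F(s)$; this is achieved by averaging $s$ over the buffer interval $[T-\tau_2,T-\tau_1]$. For every such $s$ and every $t^*\in[T-\tau_1,T]$, using $[s,t^*]\subset[T-\tau_2,T]$ and (taking $t=T$) $\int_s^T\ge\int_{T-\tau_1}^T$, I obtain both $F(t^*)\le F(s)+K\int_{T-\tau_2}^T h$ and $\E\int_{T-\tau_1}^T\int_{\R^N}\eta^2|\nabla\varphi|^2\le F(s)+K\int_{T-\tau_2}^T h$. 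Averaging in $s$ and bounding $\tfrac{1}{\tau_2-\tau_1}\int_{T-\tau_2}^{T-\tau_1}F(s)\,\mathrm ds\le(\tau_2-\tau_1)^{-1}\int_{T-\tau_2}^T h(s)\,\mathrm ds$ makes the right-hand side independent of $s$; taking the supremum over $t^*$ shows that each of the two quantities on the left is at most $[(\tau_2-\tau_1)^{-1}+K]\int_{T-\tau_2}^T h(s)\,\mathrm ds$. Since $\eta\equiv1$ on $B_r(x_0)$, adding these two bounds dominates the left side of (\ref{1.2}), and the resulting factor together with the structure of $K$ produces the asserted constant $C_1[(R-r)^{-2}+(\tau_2-\tau_1)^{-1}+\|a\|_\infty+\|b\|_\infty^2]$.

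The main obstacle is the rigorous justification of the \Ito formula for $\varphi^2$: the solution lies only in $L^2_{\F}(0,T;H^1(\R^N))$ and is not classically differentiable, so the pointwise computation above must be read as the energy identity in the Gelfand triple $H^1(\R^N)\hookrightarrow L^2(\R^N)\hookrightarrow H^{-1}(\R^N)$ (equivalently obtained by a Galerkin approximation and passage to the limit), which is available in the well-posedness framework cited after (\ref{1.1}). A secondary point requiring care is the bookkeeping of constants so that the final estimate has exactly the additive form $(R-r)^{-2}+(\tau_2-\tau_1)^{-1}+\|a\|_\infty+\|b\|_\infty^2$; the buffer-averaging step is what produces the $(\tau_2-\tau_1)^{-1}$ contribution cleanly, and the harmless multiplicative factor from adding the sup and gradient bounds is absorbed into $C_1$.
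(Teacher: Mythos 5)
Your proof is correct, and its core is the same as the paper's: a Caccioppoli-type estimate obtained by applying the \Ito formula to the square of the solution against a spatial cutoff $\eta$ supported in $B_R(x_0)$ with $\eta\equiv1$ on $B_r(x_0)$, integrating by parts, and absorbing the cross term $\eta\varphi\,\nabla\eta\cdot\nabla\varphi$ by Young's inequality. The one genuine difference is how the time localization is handled. The paper applies \Ito's formula to $\eta^2\xi^2\varphi^2$, where $\xi$ is a second, \emph{temporal} cutoff with $\xi\equiv0$ on $(-\infty,T-\tau_2]$, $\xi\equiv1$ on $[T-\tau_1,+\infty)$ and $|\xi'|\leq C(\tau_2-\tau_1)^{-1}$; the vanishing of $\xi$ at $T-\tau_2$ kills the initial term automatically, and the $\xi\xi'$ term is what produces the $(\tau_2-\tau_1)^{-1}$ contribution. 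You instead keep only the spatial cutoff, integrate the resulting differential inequality from an arbitrary starting time $s$ in the buffer interval $[T-\tau_2,T-\tau_1]$, and then average over $s$, using $F(s)\leq h(s)$ to convert the averaged initial term into $(\tau_2-\tau_1)^{-1}\int_{T-\tau_2}^{T}h$. The two devices are interchangeable here and yield the same additive constant structure; the temporal-cutoff version packages everything into a single inequality with no free parameter, while your averaging argument avoids introducing the second cutoff function at the cost of the extra bookkeeping over $s$. Your closing remark about rigor is also apt, and applies equally to both arguments: the \Ito computation must be justified in the Gelfand triple $H^1\hookrightarrow L^2\hookrightarrow H^{-1}$ (plus a standard localization/stopping-time argument to ensure the stochastic integral has zero expectation), which is exactly the framework invoked by the paper's well-posedness citation.
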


\begin{lemma}\label{lemma-1.2}
	There is a constant $C_{2}>0$ so that for any $\varphi_{0}\in L^2_{\cF_0}(\Omega;L^2(\mathbb{R}^{N})),$
	the solution $\varphi$ of (\ref{1.1}) satisfies
	\begin{equation}\label{1.3}
		\displaystyle{\sup_{t\in[T-\tau,T]}}\E\int_{B_{R}(x_{0})}| \nabla\varphi(x,t)|^{2}\mathrm{d}x\leq C_{2}\big(R^{-4}+\tau^{-2}+\|a\|^{2}_{\infty}+\|b\|^{4}_{\infty}\big)\E\int_{T-2\tau}^{T}\int_{B_{2R}(x_{0})}\varphi^{2}
		(x,s)\mathrm{d}x\mathrm{d}s,
	\end{equation}
	for all $0<R<+\infty,\ 0<\tau<T/2$ and $x_{0}\in \mathbb{R}^{N}.$
\end{lemma}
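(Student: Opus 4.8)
The plan is to derive a localized first-order energy estimate for $\varphi$ by applying \Ito's formula to the functional $t\mapsto\E\int_{\R^N}\eta^2|\nabla\varphi(x,t)|^2\,\mathrm dx$, and then to convert the resulting space-time gradient integral into an $L^2$-integral of $\varphi$ by invoking Lemma \ref{lemma-1.1}. Here $\eta=\eta(x)$ is a smooth spatial cut-off with $\eta\equiv1$ on $B_R(x_0)$, $\mathrm{supp}\,\eta\subset B_{\rho}(x_0)$ for some intermediate radius $R<\rho<2R$ (say $\rho=\tfrac32R$), and $|\nabla\eta|\lesssim R^{-1}$, $|\Delta\eta|\lesssim R^{-2}$.

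First I would write the energy identity. Formally $\nabla\varphi$ solves a stochastic heat equation driven by $\nabla(a\varphi)$ and $\nabla(b\varphi)$; computing the quadratic variation and taking expectation (which kills the martingale term) gives
\[
\frac{\mathrm d}{\mathrm dt}\E\int\eta^2|\nabla\varphi|^2\,\mathrm dx=\E\int\Big(2\eta^2\nabla\varphi\cdot\Delta\nabla\varphi+2\eta^2\nabla\varphi\cdot\nabla(a\varphi)+\eta^2|\nabla(b\varphi)|^2\Big)\mathrm dx.
\]
Integrating the first term by parts twice produces the good negative Hessian term $-2\E\int\eta^2|D^2\varphi|^2$ together with a remainder bounded by $\|\Delta(\eta^2)\|_\infty\lesssim R^{-2}$ times $\int|\nabla\varphi|^2$. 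The crucial point is that, because $a$ is only bounded (no spatial derivative available), the potential term $2\eta^2\nabla\varphi\cdot\nabla(a\varphi)$ must be integrated by parts so as to move the gradient off of $a\varphi$, turning it into $-2\int a\varphi\big(\eta^2\Delta\varphi+\nabla(\eta^2)\cdot\nabla\varphi\big)$; Young's inequality then absorbs a small multiple of $\eta^2|\Delta\varphi|^2$ into the Hessian term at the cost of a $\|a\|_\infty^2\varphi^2$ contribution, while the cross term is split asymmetrically so that $a$ lands entirely on $\varphi^2$ (coefficient $\|a\|_\infty^2$) and only an $R^{-2}$-coefficient remains on $|\nabla\varphi|^2$. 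Using $b\in W^{1,\infty}(\R^N)$, the \Ito correction $\eta^2|\nabla(b\varphi)|^2$ is bounded by $\|b\|_\infty^2\eta^2(|\nabla\varphi|^2+\varphi^2)$. Altogether this yields a differential inequality $\frac{\mathrm d}{\mathrm dt}\E\int\eta^2|\nabla\varphi|^2\le C\alpha\,\E\int_{B_\rho}|\nabla\varphi|^2+C\beta\,\E\int_{B_\rho}\varphi^2$ with $\alpha\lesssim R^{-2}+\|b\|_\infty^2$ and $\beta\lesssim\|a\|_\infty^2+\|b\|_\infty^2$.

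Next I would extract the supremum in time. Multiplying by a nonnegative time weight $\chi(t)$ with $\chi\equiv1$ on $[T-\tau,T]$, vanishing at $t=T-\tfrac32\tau$, and $|\chi'|\lesssim\tau^{-1}$, then integrating in time (so that the boundary term drops), I obtain for every $t\in[T-\tau,T]$
\[
\E\int_{B_R}|\nabla\varphi(x,t)|^2\,\mathrm dx\lesssim(\tau^{-1}+\alpha)\,\E\int_{T-\frac32\tau}^T\!\!\int_{B_\rho}|\nabla\varphi|^2+\beta\,\E\int_{T-\frac32\tau}^T\!\!\int_{B_\rho}\varphi^2.
\]
Now Lemma \ref{lemma-1.1}, applied on the nested pair $B_\rho\times(T-\tfrac32\tau,T)\subset B_{2R}\times(T-2\tau,T)$ (the gaps being $R/2$ and $\tau/2$, so the Lemma's constants are $\lesssim R^{-2},\tau^{-1}$), bounds $\E\int_{T-3\tau/2}^T\int_{B_\rho}|\nabla\varphi|^2$ by $\big(R^{-2}+\tau^{-1}+\|a\|_\infty+\|b\|_\infty^2\big)\,\E\int_{T-2\tau}^T\int_{B_{2R}}\varphi^2$. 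Substituting and taking the supremum over $t$ leaves the product of $(\tau^{-1}+\alpha)$ with the Lemma \ref{lemma-1.1} factor, all multiplying $\E\int_{T-2\tau}^T\int_{B_{2R}}\varphi^2$.

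The final step is the power bookkeeping: expanding $(\tau^{-1}+R^{-2}+\|b\|_\infty^2)(R^{-2}+\tau^{-1}+\|a\|_\infty+\|b\|_\infty^2)$ together with the residual $\beta$-term, each mixed monomial is estimated by Young's inequality against the four target quantities, e.g. $R^{-2}\|a\|_\infty\le\tfrac12(R^{-4}+\|a\|_\infty^2)$ and $\|b\|_\infty^2\|a\|_\infty\le\tfrac12(\|b\|_\infty^4+\|a\|_\infty^2)$, so the whole constant collapses to $C_2(R^{-4}+\tau^{-2}+\|a\|_\infty^2+\|b\|_\infty^4)$. I expect the main obstacle to be twofold, both technical rather than conceptual: (i) justifying the \Ito computation rigorously, since the formal identity involves $D^2\varphi$ whereas the a priori regularity is only $\varphi\in L^2_{\F}(0,T;H^1(\R^N))$ — this forces a regularization (mollification in space, or a Galerkin scheme) and a limit passage, which is why the detailed proof is deferred to the Appendix; and (ii) organizing the integrations by parts and the Young splittings so that the non-differentiable potential $a$ is never differentiated and the coefficients land with \emph{exactly} the right powers ($\|a\|_\infty^2$ and $\|b\|_\infty^4$, avoiding an uncontrollable factor such as $\|a\|_\infty^2R^{-2}$), which is precisely what dictates the asymmetric choices in the two Young inequalities above.
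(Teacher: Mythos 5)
Your proposal follows essentially the same route as the paper's Appendix proof: the paper applies the \Ito formula componentwise to $\tfrac{1}{2}\eta^{2}\xi^{2}\varphi_i^{2}$ (with the time cutoff $\xi$ built into the functional rather than introduced afterwards), integrates by parts so that $a$ is never differentiated, absorbs the resulting Hessian term $\varphi_{ii}$ via Young's inequality, and then invokes Lemma \ref{lemma-1.1} on a nested space-time cylinder before the same final power bookkeeping. The only cosmetic differences are your vector formulation ($\eta^{2}|\nabla\varphi|^{2}$ and $D^{2}\varphi$ instead of summing over the components $\varphi_i$) and the intermediate radii/times ($3R/2$, $3\tau/2$ versus the paper's $4R/3$, $4\tau/3$), so the two arguments coincide in substance.
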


The following auxiliary lemma is basically motivated by \cite[Lemma 3]{phung2014bang} and \cite[Lemma 2.3]{duan2020observability}.

\begin{lemma}\label{lemma-1.3}
	Let $0<2r\leq R<+\infty$ and $\delta\in (0,1]$.
	Then there are two constants $C_{3}:= C_{3}(r,\delta)>0$ and
	$C_{4}:= C_{4}(r,\delta)>0$ so that for any  $0<\tau_{1}<\tau_{2}<T$, $x_{0}\in \mathbb{R}^{N}$,
	$\varphi_{0}\in L^2_{\cF_0}(\Omega;L^2(\mathbb{R}^{N}))\setminus\{0\}$, the quantity
	\begin{equation}\label{1.8}
		\begin{split}
			h_{0}
			=&C_{3}\Bigg[\ln(1+C_{4})+\left(1+2C_{1}(1+\frac{1}{r^2})\right)
			\left(1+\frac{1}{\tau_{2}-\tau_{1}}+\|a\|^{2/3}_{\infty}+\|b\|_{\infty}^2\right)+\frac{4C_{3}}{T}
			\\
			&+(2\|a\|_{\infty}+\|b\|_{\infty}^2)T+\ln\left(\frac{\E\int_{T-\tau_{2}}^{T}\int_{Q_{R}(x_{0})}
				\varphi^{2}(x,t)\mathrm{d}x\mathrm{d}t}{\E\int_{B_{r}(x_{0})}\varphi^{2}(x,T)\mathrm{d}x}\right)\Bigg]^{-1}
		\end{split}
	\end{equation}
	(where $\varphi$ satisfies the equation (\ref{1.1}) with $\varphi_{0}\in L^2_{\cF_0}(\Omega;L^2(\mathbb{R}^{N}))\setminus\{0\}$, and $C_{1}>1$ is the constant given by Lemma~\ref{lemma-1.1}),
	has the following two properties:
	\begin{description}
		\item[($i$)] \begin{equation}\label{1.9}
			0<\left(1+4C_3 T^{-1}+(2\|a\|_{\infty}+\|b\|_{\infty}^2)T+\|a\|^{2/3}_{\infty}+\|b\|_{\infty}^2\right)h_{0}<C_{3}.
		\end{equation}
		\item[($ii$)] There is a constant $C_{5}:= C_{5}(r,\delta)>C_{3}$ so that

		\begin{equation}\label{1.10}
			e^{(2\|a\|_{\infty}+\|b\|_{\infty}^2)T}\E\int_{T-\tau_{2}}^{T}\int_{Q_{R}(x_{0})}\varphi^{2}(x,s)\mathrm{d}x\mathrm{d}s\leq e^{1+\frac{C_{5}}{h_{0}}}\E\int_{B_{(1+\delta)r}(x_{0})}\varphi^2(x,t)\mathrm{d}x
		\end{equation}
		
	\end{description}
	for each $t\in[T-\min\{\tau_{2},h_{0}\},T]$.
\end{lemma}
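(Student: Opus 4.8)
The plan is to dispose of the two claims separately. Property (i) is a purely arithmetic statement about the explicit number $h_0=C_3/M$, where $M$ denotes the bracket in \eqref{1.8}; property (ii) is the analytic core, which I would first reduce to a short-time interpolation inequality and then prove by a stochastic parabolic frequency function argument.

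For (i), since $h_0=C_3/M$, the chain \eqref{1.9} is equivalent to the strict inequality $P<M$ with $P:=1+4C_3T^{-1}+(2\|a\|_\infty+\|b\|_\infty^2)T+\|a\|_\infty^{2/3}+\|b\|_\infty^2$. The summands $4C_3T^{-1}$ and $(2\|a\|_\infty+\|b\|_\infty^2)T$ appear identically in $M$, and the product term $\big(1+2C_1(1+r^{-2})\big)\big(1+(\tau_2-\tau_1)^{-1}+\|a\|_\infty^{2/3}+\|b\|_\infty^2\big)$ dominates $1+\|a\|_\infty^{2/3}+\|b\|_\infty^2$ with a prefactor exceeding $3$ (recall $C_1>1$). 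The only term in $M$ that can be negative is the logarithm of the ratio in \eqref{1.8}; to bound it below I would apply Lemma~\ref{lemma-1.1} (with radii $r<R$, using $2r\le R$ so that $(R-r)^{-2}\le r^{-2}$, and $B_R(x_0)\subset Q_R(x_0)$), which gives $\E\int_{B_r(x_0)}\varphi^2(x,T)\le C_1\big[r^{-2}+(\tau_2-\tau_1)^{-1}+\|a\|_\infty+\|b\|_\infty^2\big]\E\int_{T-\tau_2}^T\int_{Q_R(x_0)}\varphi^2$, so that the logarithm is at least $-\ln\!\big(C_1[r^{-2}+(\tau_2-\tau_1)^{-1}+\|a\|_\infty+\|b\|_\infty^2]\big)$. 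Using $\ln(1+x)\le x$, the subadditivity of $\ln(1+\cdot)$, and $\ln(1+\|a\|_\infty)\le C\|a\|_\infty^{2/3}$, this lower bound is absorbed into the multiplicative slack of the product term together with $\ln(1+C_4)$, provided $C_4$ (taken $\ge C_1$) and $C_3$ are chosen large enough, both depending on $r,\delta$. This yields $P<M$, which is (i).

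For (ii) I would first exploit the fact that the logarithm in \eqref{1.8} was inserted exactly so as to cancel. Writing $C_3/h_0=M$ and exponentiating the definition gives $\E\int_{T-\tau_2}^T\int_{Q_R(x_0)}\varphi^2=e^{C_3/h_0-S}\,\E\int_{B_r(x_0)}\varphi^2(x,T)$, where $S$ collects the remaining nonnegative summands of $M$ and in particular satisfies $S\ge(2\|a\|_\infty+\|b\|_\infty^2)T$. Multiplying by $e^{(2\|a\|_\infty+\|b\|_\infty^2)T}$ and using $S\ge(2\|a\|_\infty+\|b\|_\infty^2)T$ then reduces \eqref{1.10} to the data-independent estimate $\E\int_{B_r(x_0)}\varphi^2(x,T)\le e^{1+(C_5-C_3)/h_0}\,\E\int_{B_{(1+\delta)r}(x_0)}\varphi^2(x,t)$ for $t\in[T-\min\{\tau_2,h_0\},T]$ and some $C_5>C_3$. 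This is a genuine quantitative unique continuation statement --- the value at the final time on $B_r(x_0)$ controlled by the value at a slightly earlier time on the marginally larger ball $B_{(1+\delta)r}(x_0)$ --- and it cannot follow from the energy bounds of Lemmas~\ref{lemma-1.1}--\ref{lemma-1.2} alone. Here I would invoke the locally parabolic frequency function, adapted to \eqref{1.1}: fix a Gaussian weight $G$ centered at $x_0$ at a scale comparable to $r$ (so as to avoid boundary terms on $\R^N$ and to compare the weighted integrals with those over $B_{(1+\delta)r}(x_0)$ and $Q_R(x_0)$), put $H(t)=\E\int_{\R^N}\varphi^2 G\,\mathrm dx$ and $D(t)=\E\int_{\R^N}|\nabla\varphi|^2 G\,\mathrm dx$, and study the frequency $N=D/H$. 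Applying It\^{o}'s formula to $\varphi^2$ and to $|\nabla\varphi|^2$ and taking expectations kills the $\mathrm dW$-martingale and leaves drift corrections that generate the dependence on $\|a\|_\infty$ and, through $\nabla(b\varphi)=(\nabla b)\varphi+b\nabla\varphi$, on $\|b\|_\infty$ in the $W^{1,\infty}$-norm. The main obstacle is precisely this step: establishing an almost-monotonicity differential inequality for $N$ with the potential dependence made explicit and sharp (the exponents $\|a\|_\infty^{2/3}$ and $\|b\|_\infty^2$ in \eqref{1.8}--\eqref{1.10} forcing Young's inequality with carefully tuned weights), after which one integrates $\tfrac{\mathrm d}{\mathrm dt}\ln H=-2N+(\text{corrections})$ over $[t,T]$ to obtain $H(T)\le e^{1+(C_5-C_3)/h_0}H(t)$ and translates back to the balls through the two-sided bounds on $G$.
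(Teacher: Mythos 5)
Your treatment of part (i) is correct and is essentially the paper's own argument: Lemma~\ref{lemma-1.1}, applied with inner radius $r$ and outer radius $2r\le R$ (the paper) or $R$ (you), bounds the logarithm in \eqref{1.8} from below by $-\ln\big(C_{1}[r^{-2}+(\tau_{2}-\tau_{1})^{-1}+\|a\|_{\infty}+\|b\|_{\infty}^{2}]\big)$, which the product term then absorbs. One caveat: no largeness of $C_{3},C_{4}$ is needed for this --- nor is it available, since in the paper $C_{3}=(b_{2}-b_{3})(b_{3}-1)r^{2}/b_{1}$ is forced to be \emph{small} by part (ii), so an argument for (i) that genuinely required $C_{3}$ large would be inconsistent with (ii). Your reduction of (ii) to the two-time estimate $\E\int_{B_{r}(x_0)}\varphi^{2}(x,T)\,\mathrm{d}x\le e^{1+(C_{5}-C_{3})/h_{0}}\E\int_{B_{(1+\delta)r}(x_0)}\varphi^{2}(x,t)\,\mathrm{d}x$ is also correct, and is exactly the paper's intermediate estimate \eqref{2.22222}.

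The gap is in how you then prove that reduced estimate. You call it ``data-independent'' and propose to establish it as a generic unique continuation fact by a frequency function with a Gaussian weight at scale comparable to $r$, never using the definition of $h_{0}$ again. This cannot work, because as a generic local-to-local statement the inequality is false: for the deterministic heat equation with initial mass concentrated at distance $L$ from $x_{0}$, the ratio $\E\int_{B_{r}(x_0)}\varphi^{2}(x,T)\,\mathrm{d}x\,\big/\,\E\int_{B_{(1+\delta)r}(x_0)}\varphi^{2}(x,t)\,\mathrm{d}x$ grows like $e^{cL^{2}(T-t)/T^{2}}$, hence is unbounded over solutions for fixed $t<T$. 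The statement survives only because $h_{0}$ itself degenerates (like $L^{-1}$ in this example) as the global-to-local ratio in \eqref{1.8} blows up, which simultaneously shrinks the admissible window $[T-h_{0},T]$ and enlarges the constant $e^{C_{5}/h_{0}}$; balancing these effects forces any proof to invoke the defining identity of $h_{0}$ a \emph{second} time, downstream of your reduction. That second use is the heart of the paper's proof: It\^{o}'s formula is applied to $e^{-|x-x_{0}|^{2}/h}\eta^{2}\varphi^{2}$ with the weight scale $h$ proportional to $h_{0}$ (not to $r$); the cutoff error, controlled by the global cylinder integral, then carries the factor $e^{-(b_{3}-1)r^{2}/h}=e^{-C_{3}/h_{0}}$, which by the definition of $h_{0}$ converts that global integral back into $e^{-1}\,\E\int_{B_{r}(x_0)}\varphi^{2}(x,T)\,\mathrm{d}x$ (see \eqref{2.211111}) and is absorbed into the left-hand side of \eqref{1.15}; this requires $C_{3}$ to be smaller than the geometric gain $(b_{3}-1)r^{2}$, which is why $C_{3}$ cannot be taken large. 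With your fixed scale $\sim r$, the tail of the weighted integral outside $B_{(1+\delta)r}(x_0)$ is suppressed only by the solution-independent factor $e^{-(1+\delta)^{2}/4}$ relative to the global energy, so the passage from weighted quantities back to balls can never beat an arbitrarily large global-to-local ratio.

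Two further points. First, no frequency function is needed for this lemma at all: the paper's proof is only It\^{o}'s formula, Cauchy--Schwarz and Gronwall on the weighted energy, whereas the almost-monotonicity you defer as the ``main obstacle'' is never established in your sketch --- and for a fixed-in-time Gaussian weight it is not available; the parabolic frequency monotonicity (Lemma~\ref{lemma-2.1}) requires the backward-heat-kernel weight $G_{\lambda}$ of \eqref{G}, whose scale $T-t+\lambda$ shrinks as $t\to T$. Second, within this paper's architecture your route is circular: the frequency function argument (Lemmas~\ref{lemma-2.1}--\ref{lemma-2.2}) consumes Lemma~\ref{lemma-1.3} precisely to control its cutoff error terms and to bound $\lambda N_{\lambda,R_{0}}(T)$. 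A minor slip: only $\|b\|_{\infty}$ enters here (It\^{o} applied to $\varphi^{2}$ produces $b^{2}\varphi^{2}$, with no derivative of $b$); the $W^{1,\infty}$-norm of $b$ via $\nabla(b\varphi)$ is needed only for gradient estimates, i.e., in Lemma~\ref{lemma-1.2} and in Section 4.
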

\begin{proof}
	For each $r'>0$, we write  $B_{r'}:= B_{r'}(x_{0})$ and
	$Q_{r'}:= Q_{r'}(x_{0})$. Since $B_{2r}\subset Q_{R}$ and
	$$
	e^{2C_{1}\left(1+r^{-2}\right)\left[1+(\tau_{2}-\tau_{1})^{-1}+\|a\|^{2/3}_{\infty}+\|b\|_{\infty}^2\right]}
	\geq C_{1} \left[r^{-2}+(\tau_{2}-\tau_{1})^{-1}+\|a\|_{\infty}+\|b\|_{\infty}^2\right],
	$$
	by (\ref{1.2}) (where $R$ is replaced by $2r$), we have
	\begin{eqnarray*}
		&e^{2C_1\left(1+r^{-2}\right)\left[1+(\tau_2-\tau_1)^{-1}+\|a\|^{2/3}_{\infty}+\|b\|_{\infty}^2\right]}
		\displaystyle{\frac{\E\int_{T-\tau_2}^T\int_{Q_R}\varphi^2\mathrm dx\mathrm dt}{\E\int_{B_r}\varphi^2(x,T)\mathrm dx}}\\
		&\geq C_1\left[r^{-2}+(\tau_2-\tau_1)^{-1}+\|a\|_{\infty}+\|b\|_{\infty}^2\right]
		\displaystyle{\frac{\E\int_{T-\tau_2}^T\int_{B_{2r}}\varphi^{2}\mathrm dx\mathrm dt}{\E\int_{B_r}\varphi^2(x,T)\mathrm dx}}\geq 1.
	\end{eqnarray*}
	Hence, (\ref{1.9}) follows immediately from (\ref{1.8}).
	
	We now turn to the proof of (\ref{1.10}).
	Let $h>0$, $\beta(x)=|x-x_{0}|^2$ and $\eta\in C_{0}^{\infty}(B_{(1+\delta)r})$
	be such that
	$$
	0\leq\eta(\cdot)\leq 1\;\;\mbox{in}\;\;B_{(1+\delta)r}\;\;\mbox{and}\;\;
	\eta(\cdot)=1\;\;\mbox{in}\;\;B_{(1+3\delta/4)r}.
	$$
	Applying first the \Ito formula to $e^{-\beta/h}\eta^2\varphi^2$, and then integrating over  $B_{(1+\delta)r}$ and taking the expectation, we get
	\begin{equation}\label{1.99999}
		\begin{split}
			&\displaystyle{} \frac{1}{2}\frac{\mathrm{d}}{\mathrm{d}t}\E\int_{B_{(1+\delta)r}}e^{-\beta/h}(\eta\varphi)^2\mathrm{d}x
			+\E\int_{B_{(1+\delta)r}}\nabla\varphi\cdot\nabla(e^{-\beta/h}\eta^2\varphi)\mathrm{d}x\\
			=&\displaystyle{}\E\int_{B_{(1+\delta)r}}ae^{-\beta/h}(\eta\varphi)^2\mathrm{d}x
			+\frac{1}{2}\E\int_{B_{(1+\delta)r}}\eta^{2}e^{-\beta/h}b^2\varphi^2\mathrm{d}x.
		\end{split}
	\end{equation}
	Since
	$$
	\nabla(e^{-\beta/h}\eta^{2}\varphi)
	=-\frac{1}{h}e^{-\beta/h}\eta^{2}\varphi\nabla\beta+2e^{-\beta/h}\eta\varphi\nabla\eta+e^{-\beta/h}\eta^{2}\nabla\varphi,$$
	by (\ref{1.99999}), we have
	\begin{equation*}
		\begin{split}
			&\frac{1}{2}\frac{\mathrm{d}}{\mathrm{d}t}\E\int_{B_{(1+\delta)r}}e^{-\beta/h}(\eta\varphi)^{2}\mathrm{d}x
			+\E\int_{B_{(1+\delta)r}}e^{-\beta/h}|\eta\nabla\varphi|^{2}\mathrm{d}x\\
			=&\E\int_{B_{(1+\delta)r}}\frac{1}{h}e^{-\beta/h}\eta^2\varphi\nabla\beta\cdot\nabla\varphi\mathrm{d}x+\E\int_{B_{(1+\delta)r}}
			-2e^{-\beta/h}\eta\varphi\nabla\eta\cdot\nabla\varphi\mathrm{d}x\\
			&+\E\int_{B_{(1+\delta)r}}ae^{-\beta/h}(\eta\varphi)^2\mathrm{d}x+\frac{1}{2}\E\int_{B_{(1+\delta)r}}\eta^{2}e^{-\beta/h}b^2\varphi^2\mathrm{d}x\\
			\leq&\E\int_{B_{(1+\delta)r}}e^{-\beta/(2h)}|\eta\nabla\varphi|
			\left(\frac{2}{h}|x-x_{0}|e^{-\beta/(2h)}\eta|\varphi|+2|\nabla\eta|e^{-\beta/(2h)}|\varphi|\right)\mathrm{d}x\\
			&+\|a\|_{\infty}\E\int_{B_{(1+\delta)r}}e^{-\beta/h}(\eta\varphi)^{2}\mathrm{d}x+\frac{1}{2}\|b\|_{\infty}^2\E\int_{B_{(1+\delta)r}}e^{-\beta/h}
			(\eta\varphi)^{2}\mathrm{d}x.
		\end{split}
	\end{equation*}
	This, along with  Cauchy-Schwarz inequality, implies that
	\begin{equation*}
		\begin{split}
			\frac{\mathrm{d}}{\mathrm{d}t}\E\int_{B_{(1+\delta)r}}e^{-\beta/h}(\eta\varphi)^{2}\mathrm{d}x
			\leq&\left[\frac{4(1+\delta)^2 r^{2}}{h^2}+2\|a\|_{\infty}+\|b\|_{\infty}^2\right]\E\int_{B_{(1+\delta)r}}e^{-\beta/h}(\eta\varphi)^{2}\mathrm{d}x\\
			&+4\E\int_{\big\{x:(1+3\delta/4)r\leq\sqrt{\beta(x)}\leq(1+\delta)r\big\}}|\nabla\eta|^{2}e^{-\beta/h}\varphi^{2}\mathrm{d}x,
		\end{split}
	\end{equation*}
	which indicates that
	\begin{equation*}
		\begin{split}
			\frac{\mathrm{d}}{\mathrm{d}t}\E\int_{B_{(1+\delta)r}}e^{-\beta/h}(\eta\varphi)^{2}\mathrm{d}x
			\leq&\left[\frac{4(1+\delta)^2 r^{2}}{h^2}+2\|a\|_{\infty}+\|b\|_{\infty}^2\right]\E\int_{B_{(1+\delta)r}}e^{-\beta/h}(\eta\varphi)^{2}\mathrm{d}x\\
			&+4\|\nabla\eta\|^{2}_{\infty}e^{-\frac{(1+3\delta/4)^2 r^2}{h}}\E\int_{B_{(1+\delta)r}}\varphi^{2}\mathrm{d}x.
		\end{split}
	\end{equation*}
	Here and throughout the proof of Lemma~\ref{lemma-1.3},
	$\|\nabla\eta\|_{\infty}:=\|\nabla\eta\|_{L^{\infty}(B_{(1+\delta)r})}$.
	From the latter it follows that
	\begin{equation*}
		\begin{split}
			&\frac{\mathrm{d}}{\mathrm{d}t}\left[e^{-\left(\frac{4(1+\delta)^2 r^2}{h^2}+2\|a\|_{\infty}+\|b\|_{\infty}^2\right)t}
			\E\int_{B_{(1+\delta)r}}e^{-\beta/h}|\eta\varphi|^{2}\mathrm{d}x\right]\\
			\leq&4\|\nabla\eta\|^{2}_{\infty}e^{-\left(\frac{4(1+\delta)^2 r^2}{h^2}+2\|a\|_{\infty}+\|b\|_{\infty}^2\right)t}
			e^{-\frac{(1+3\delta/4)^2 r^2}{h}}\E\int_{B_{(1+\delta)r}}\varphi^{2}\mathrm{d}x.
		\end{split}
	\end{equation*}
	Integrating the above inequality over $(t,T)$, we get
	\begin{equation}\label{1.109999}
		\begin{split}
			&\displaystyle{\E\int_{B_{(1+\delta)r}}}e^{-\beta/h}|\eta\varphi(x,T)|^{2}\mathrm dx\\
			\leq& e^{\left(\frac{4(1+\delta)^2 r^2}{h^2}+2\|a\|_{\infty}+\|b\|_{\infty}^2\right)(T-t)}
			\displaystyle{\E\int_{B_{(1+\delta)r}}}e^{-\beta/h}|\eta\varphi(x,t)|^{2}\mathrm{d}x\\
			&+4e^{\left(\frac{4(1+\delta)^2 r^2}{h^2}+2\|a\|_{\infty}+\|b\|_{\infty}^2\right)(T-t)}\|\nabla\eta\|^2_{\infty}
			e^{-\frac{(1+3\delta/4)^2 r^2}{h}}\displaystyle{\int_t^T\int_{B_{(1+\delta)r}}}\varphi^{2}(x,s)\mathrm{d}x\mathrm{d}s.
		\end{split}
	\end{equation}
	We simply write  $b_{1}:= 4(1+\delta)^{2}, b_{2}:= (1+3\delta/4)^{2}$
	and $b_{3}:= (1+\delta/2)^{2}.$ It is clear that $1<b_{3}<b_{2}<b_{1}$.
	Recall that $t\leq T$. We now suppose  $h>0$ to be such that
	$$
	0<T-\frac{(b_{2}-b_{3})h}{b_{1}}\leq t.
	$$
	Then $b_{1}(T-t)/h^{2}\leq(b_{2}-b_{3})/h$ and (\ref{1.109999}) yields
	\begin{equation*}
		\begin{split}
			\E\int_{B_{(1+\delta)r}}e^{-\beta/h}|\eta\varphi(x,T)|^{2}\mathrm{d}x
			\leq& e^{\frac{(b_{2}-b_{3})r^{2}}{h}}e^{(2\|a\|_{\infty}+\|b\|_{\infty}^2)T}
			\E\int_{B_{(1+\delta)r}}e^{-\beta/h}|\eta\varphi(x,t)|^{2}\mathrm{d}x\\
			&+4\|\nabla\eta\|^{2}_{\infty}e^{(2\|a\|_{\infty}+\|b\|_{\infty}^2)T}e^{\frac{-b_{3}r^{2}}{h}}
			\E\int_{t}^{T}\int_{B_{(1+\delta)r}}\varphi^{2}(x,s)\mathrm{d}x\mathrm{d}s.
		\end{split}
	\end{equation*}
	Since $\eta(\cdot)=1$ in $B_{r}$, the following estimate holds
	\begin{equation}\label{1.15}
		\begin{split}
			\displaystyle{}\E\int_{B_{r}}|\varphi(x,T)|^{2}\mathrm{d}x\leq& e^{\frac{(b_{2}-b_{3}+1)r^{2}}{h}}e^{(2\|a\|_{\infty}+\|b\|_{\infty}^2)T}
			\E\int_{B_{(1+\delta)r}}e^{-\beta/h}|\eta\varphi(x,t)|^{2}\mathrm{d}x\\
			&+4\|\nabla\eta\|^{2}_{\infty}e^{(2\|a\|_{\infty}+\|b\|_{\infty}^2)T}e^{\frac{-(b_{3}-1)r^{2}}{h}}
			\E\int_{t}^{T}\int_{B_{(1+\delta)r}}\varphi^{2}(x,s)\mathrm{d}x\mathrm{d}s,
		\end{split}
	\end{equation}
	whenever $0<T-(b_{2}-b_{3})h/b_{1}\leq t\leq T$. Recall that $h_{0}<T$ from \eqref{1.9}. We choose $h$ as follows:
	\begin{equation*}
		\begin{split}
			h=&\frac{b_{1}}{b_{2}-b_{3}}h_{0}\\
			=&\frac{b_{1}C_{3}/(b_{2}-b_{3})}
			{\ln\left[(1+C_{4})\left(e^{\left[1+2C_{1}(1+\frac{1}{r^2})\right]
					(1+\frac{1}{\tau_{2}-\tau_{1}}+\|a\|^{2/3}_{\infty}+\|b\|_{\infty}^2)+\frac{4C_{3}}{T}+(2\|a\|_{\infty}+\|b\|_{\infty}^2)T}\right)
				\frac{\E\int_{T-\tau_{2}}^{T}\int_{Q_{R}}\varphi^{2}\mathrm{d}x\mathrm{d}t}
				{\E\int_{B_{r}}\varphi^{2}(x,T)\mathrm{d}x}\right]}   
		\end{split}
	\end{equation*}
	with $C_{3}:=(b_{2}-b_{3})(b_{3}-1)r^{2}/b_{1}$ and
	$C_{4}:=4\|\nabla\eta\|^{2}_{\infty}$.
	Then for any $t\in[T-\min\{\tau_{2},h_{0}\},T]$, we have
	\begin{equation}\label{2.211111}
		\begin{split}
			&\displaystyle{}4\|\nabla\eta\|^{2}_{\infty}e^{(2\|a\|_{\infty}+\|b\|_{\infty}^2)T}
			e^{-\frac{(b_{3}-1)r^{2}}{h}}\int_{t}^{T}\E\int_{B_{(1+\delta)r}}\varphi^{2}(x,s)\mathrm{d}x\mathrm{d}s\\
			=&\displaystyle{}\frac{C_{4}e^{(2\|a\|_{\infty}+\|b\|_{\infty}^2)T}\E\int_{t}^{T}\int_{B_{(1+\delta)r}}\varphi^{2}(x,s)\mathrm{d}x\mathrm{d}s}
			{(1+C_{4})\left(e^{\left[1+2C_{1}(1+\frac{1}{r^2})\right]
					(1+\frac{1}{\tau_{2}-\tau_{1}}+\|a\|^{2/3}_{\infty}+\|b\|_{\infty}^2)+\frac{4C_{3}}{T}+(2\|a\|_{\infty}+\|b\|_{\infty}^2)T}\right)
				\frac{\E\int_{T-\tau_{2}}^{T}\int_{Q_{R}}\varphi^{2}(x,s)\mathrm{d}x\mathrm{d}s}{\E\int_{B_{r}}\varphi^{2}(x,T)\mathrm{d}x}}\\
			\leq&\displaystyle{} {\frac{1}{e}}\E\int_{B_{r}}\varphi^{2}(x,T)\mathrm{d}x.
		\end{split}
	\end{equation}
	The last inequality is implied by the facts that $(1+\delta)r\leq2r\leq R\ \mathrm{ and} \ B_{(1+\delta)r}\subset Q_{R}.$
	
	On one hand, by \eqref{1.15} and \eqref{2.211111}, we get
	\begin{equation}\label{2.22222}
		\left(1-\frac{1}{e}\right)\E\int_{B_{r}}\varphi^{2}(x,T)\mathrm{d}x\leq
		e^{\frac{(b_{2}-b_{3}+1)(b_{2}-b_{3})r^{2}}{b_{1}h_{0}}}
		e^{(2\|a\|_{\infty}+\|b\|_{\infty}^2)T}\E\int_{B_{(1+\delta)r}}|\varphi(x,t)|^{2}\mathrm{d}x
	\end{equation}
	for each $T-\min{\{\tau_{2},h_{0}\}}\leq t\leq T.$
	On the other hand, by \eqref{1.8}, we see
	\begin{equation*}
		\frac{\E\int_{T-\tau_{2}}^{T}\int_{Q_{R}}\varphi^{2}(x,s)\mathrm{d}x\mathrm{d}s}
		{\E\int_{B_{r}}\varphi^{2}(x,T)\mathrm{d}x}\leq e^{\frac{C_{3}}{h_{0}}},
	\end{equation*}
	which, combined with \eqref{2.22222}, indicates that
	$$
	\left(1-\frac{1}{e}\right)e^{-\frac{C_{3}}{h_{0}}}
	\E\int_{T-\tau_{2}}^{T}\int_{Q_{R}}\varphi^{2}(x,s)\mathrm{d}x\mathrm{d}s\leq
	e^{\frac{(b_{2}-b_{3}+1)(b_{2}-b_{3})r^{2}}{b_{1}h_{0}}}
	e^{(2\|a\|_{\infty}+\|b\|_{\infty}^2)T}\E\int_{B_{(1+\delta)r}}|\varphi(x,t)|^{2}\mathrm{d}x
	$$
	for each $T-\min{\{\tau_{2},h_{0}\}}\leq t\leq T.$
	Since $(2\|a\|_{\infty}+\|b\|_{\infty}^2)Th_{0}<C_{3}$ (see \eqref{1.9}), the desired estimate \eqref{1.10}
	follows from the latter inequality immediately with $C_{5}:=3C_{3}+(b_{2}-b_{3}+1)(b_{2}-b_{3})r^{2}/b_{1}$.
\end{proof}

\section{Proof of Theorem \ref{2.3}}
In this section, we shall study the quantitative version of unique continuation for the solution of \eqref{1.1}, i.e., Theorem \ref{2.3}. In what follows, for each $\lambda>0$, and $x_0\in\R^N$, we define 
\begin{equation}\label{G}
	G_{\lambda}(x,t)\triangleq\frac{1}{(T-t+\lambda)^{N/2}}e^{-\frac{|x-x_{0}|^{2}}{4(T-t+\lambda)}},
	\;\;t\in [0,T],\;\;x\in \mathbb{R}^N .
\end{equation}
It is clear that
\begin{equation}\label{G-some}
	\left\{ \begin{aligned}
		&\partial_tG_{\lambda}(x,t)+\Delta G_{\lambda}(x,t)=0,\quad\ \nabla G_{\lambda}(x,t)=-\frac{x-x_0}{2(T-t+\lambda)}G_{\lambda}(x,t),\\
		&\Delta G_{\lambda}(x,t)=-\frac{N}{2(T-t+\lambda)}G_{\lambda}(x,t)+\frac{|x-x_0|^2}{4(T-t+\lambda)^2}G_{\lambda}(x,t),\\
		&\partial_{x_ix_j}G_{\lambda}(x,t)=\frac{(x_i-x_{0i})(x_j-x_{0j})}{4(T-t+\lambda)^2}G_{\lambda}(x,t), \quad i\neq j.
	\end{aligned}\right.
\end{equation}

For $\delta\in(0,1]$, $R>0$, we denote $R_0:=(1+2\delta)R$. Let $\chi\in C_{0}^{\infty}(B_{R_{0}})$ be such that
\begin{equation}\label{chi}
	0\leq\chi(\cdot)\leq 1 \ \mathrm{in} \ B_{R_{0}} \ \mathrm{and} \ \chi(\cdot)=1 \ \mathrm{in} \ B_{(1+3\delta/2)R}.
\end{equation}

We set 
\begin{equation}\label{u-F}
	u:=\chi\varphi,\quad\ F:=au-\varphi\Delta\chi-2\nabla\varphi\cdot\nabla\chi.
\end{equation} 
Then one can verify that
\begin{equation}\label{3.222}
	\mathrm{d}u-\Delta u\mathrm{d}t=F\mathrm{d}t+bu\mathrm{d}W(t) \  \;\mathrm{in} \;\ B_{R_{0}}\times(0,T).
\end{equation}
Denote
\begin{equation}\label{HDN}
	\left\{ \begin{aligned}
		&H_{\lambda,R_0}(t)=\E\int_{B_{R_0}(x_{0})}|u(x,t)|^{2}G_{\lambda}(x,t)\mathrm{d}x,\\
		&D_{\lambda,R_0}(t)=\E\int_{B_{R_0}(x_{0})}|\nabla u(x,t)|^{2}G_{\lambda}(x,t)\mathrm{d}x,\\
		&N_{\lambda,R_0}(t)=\frac{2D_{\lambda,R_0}(t)}{H_{\lambda,R_0}(t)},~\text{whenever}~ H_{\lambda,R_0}(t)\neq0 .
	\end{aligned}\right.
\end{equation}
Throughout this section, we always work under the assumption $H_{\lambda,R_0}(\cdot)\neq0$.

\begin{lemma}\label{lemma-dH}
	For the function $H_{\lambda,R_0}(\cdot)$ defined in \eqref{HDN}, involving the solution $\varphi$ to the equation \eqref{1.1} over the ball $B_{R_0}(x_0)$, it holds that
	\begin{equation}\label{eq-dH}
		\frac{\mathrm{d}}{\mathrm{d}t}H_{\lambda,R_0}(t)=-2D_{\lambda,R_0}(t)+2\E\int_{B_{R_0}(x_{0})}uFG_{\lambda}(x,t)\mathrm{d}x+\E\int_{B_{R_0}(x_{0})}b^2u^2G_{\lambda}(x,t)\mathrm{d}x.
	\end{equation}
\end{lemma}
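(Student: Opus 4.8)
The plan is to apply the \Ito formula to the functional $t\mapsto u^2(x,t)G_\lambda(x,t)$, integrate in space over $B_{R_0}(x_0)$, take the expectation, and then perform two integrations by parts, closing the computation with the backward heat identity for $G_\lambda$ recorded in \eqref{G-some}.

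First I would treat $G_\lambda$ as a deterministic, smooth-in-time weight and combine it with the \Ito differential of $u^2$. Since $u$ solves \eqref{3.222}, whose diffusion coefficient is $bu$, the \Ito formula gives $\mathrm{d}(u^2)=2u(\Delta u+F)\,\mathrm{d}t+b^2u^2\,\mathrm{d}t+2bu^2\,\mathrm{d}W(t)$, where the summand $b^2u^2\,\mathrm{d}t$ is exactly the \Ito correction produced by the quadratic variation of the martingale part; this is precisely the term that will survive as the last contribution in \eqref{eq-dH}. Multiplying by the bounded-variation weight $G_\lambda$ and applying the ordinary product rule (there is no extra cross term, as $G_\lambda$ carries no stochastic part) yields
$$\mathrm{d}(u^2G_\lambda)=\big(2uG_\lambda\Delta u+2uFG_\lambda+b^2u^2G_\lambda+u^2\partial_tG_\lambda\big)\mathrm{d}t+2bu^2G_\lambda\,\mathrm{d}W(t).$$
Integrating over $B_{R_0}(x_0)$ and taking the expectation, the stochastic integral has zero mean, so that $\frac{\mathrm d}{\mathrm dt}H_{\lambda,R_0}=\E\int_{B_{R_0}(x_0)}(2uG_\lambda\Delta u+2uFG_\lambda+b^2u^2G_\lambda+u^2\partial_tG_\lambda)\,\mathrm dx$.

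Next I would integrate the Laplacian term by parts twice. Because $\chi\in C_0^\infty(B_{R_0})$ by \eqref{chi}, the function $u=\chi\varphi$ and all its derivatives vanish in a neighborhood of $\partial B_{R_0}(x_0)$, so every boundary contribution is zero and all the weighted integrals converge ($G_\lambda$ and its derivatives being smooth and bounded on $B_{R_0}(x_0)$). The first integration by parts gives $\E\int 2uG_\lambda\Delta u\,\mathrm dx=-2D_{\lambda,R_0}-2\E\int u\,\nabla G_\lambda\cdot\nabla u\,\mathrm dx$; writing $2u\nabla u=\nabla(u^2)$ and integrating by parts once more converts the last integral into $\E\int u^2\Delta G_\lambda\,\mathrm dx$. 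Collecting the weight contributions produces $\E\int u^2(\partial_tG_\lambda+\Delta G_\lambda)\,\mathrm dx$, which vanishes by the backward heat identity $\partial_tG_\lambda+\Delta G_\lambda=0$ from \eqref{G-some}. What remains is exactly $-2D_{\lambda,R_0}+2\E\int uFG_\lambda\,\mathrm dx+\E\int b^2u^2G_\lambda\,\mathrm dx$, which is \eqref{eq-dH}.

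The main obstacle is not the formal algebra but its rigorous justification: applying the \Ito formula to the $L^2$-valued process $u$ against the time-dependent weight $G_\lambda$ and interchanging $\frac{\mathrm d}{\mathrm dt}$ with $\E\int_{B_{R_0}(x_0)}(\cdot)\,\mathrm dx$. I would handle this via the standard \Ito formula for the square of the $L^2$-norm in the variational (Gelfand-triple) framework underlying the well-posedness of \eqref{1.1}, testing \eqref{3.222} against $uG_\lambda$; the compact support of $u$ inside $B_{R_0}(x_0)$ together with the smoothness of $G_\lambda$ makes all the dual pairings and integrations by parts legitimate, while the regularity $\varphi\in L^2_{\mathbb F}(\Omega;C([0,T];L^2))\cap L^2_{\mathbb F}(0,T;H^1)$ guarantees integrability of every term. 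Should one wish to avoid the abstract formula, the same identity follows by mollifying in space, applying the finite-dimensional \Ito formula, and passing to the limit.
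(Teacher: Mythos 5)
Your proof is correct and follows essentially the same route as the paper: the paper omits the detailed argument, referring instead to \cite[Lemma 2.1]{lu2015unique}, whose proof is precisely this computation --- It\^{o}'s formula applied to $u^2G_\lambda$, two integrations by parts with no boundary terms (since $u=\chi\varphi$ is compactly supported in $B_{R_0}(x_0)$), and the backward heat identity $\partial_t G_\lambda+\Delta G_\lambda=0$ from \eqref{G-some}. Your identification of the It\^{o} correction $b^2u^2\,\mathrm{d}t$ as the source of the last term in \eqref{eq-dH}, together with the vanishing expectation of the martingale part, matches the cited argument.
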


For simplicity, we denote
$$
\|b\|^2_{L^\infty_{\F}(0,+\infty;W^{1,\infty}(B_{R_0}(x_0)))}:=\|b\|^2_{B_{R_0}}.
$$
Next, we introduce the following monotonicity of the parabolic-type frequency function
associated with stochastic parabolic equations.
\begin{lemma}\label{lemma-2.1}
	For the function $N_{\lambda,R_0}(\cdot)$ defined in \eqref{HDN}, involving the solution $\varphi$ to the equation \eqref{1.1} over the ball $B_{R_0}(x_0)$, it follows that
	\begin{equation}\label{lemma-2.1-monotonicity}
		\frac{\mathrm{d}}{\mathrm{d}t}N_{\lambda,R_0}(t)\leq\left(\frac{1}{T-t+\lambda}+2\|b\|^2_{B_{R_0}(x_0)}\right)N_{\lambda,R_0}(t)+2\|b\|^2_{B_{R_0}(x_0)}
		+\frac{\E\int_{B_{R_0}(x_{0})}F^2G_{\lambda}(x,t)\mathrm{d}x}{H_{\lambda,R_0}(t)}.
	\end{equation}
\end{lemma}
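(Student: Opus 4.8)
The plan is to differentiate the quotient $N_{\lambda,R_0}=2D_{\lambda,R_0}/H_{\lambda,R_0}$ directly and then exploit a Cauchy--Schwarz inequality in the weighted space $L^2(\Omega\times B_{R_0}(x_0),\,\mathrm d\bP\otimes G_\lambda\,\mathrm dx)$. Abbreviating $H=H_{\lambda,R_0}$, $D=D_{\lambda,R_0}$, $N=N_{\lambda,R_0}$, one has $N'=\frac{2D'}{H}-\frac{N}{H}H'$. Lemma~\ref{lemma-dH} supplies $H'=-2D+2\E\int_{B_{R_0}(x_0)}uFG_\lambda\,\mathrm dx+\E\int_{B_{R_0}(x_0)}b^2u^2G_\lambda\,\mathrm dx$, so that $-\frac NH H'=N^2-\frac{2N}H\E\int uFG_\lambda\,\mathrm dx-\frac NH\E\int b^2u^2G_\lambda\,\mathrm dx$, the term $N^2$ arising from $\frac{2ND}{H}$. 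Everything thus reduces to computing $D'$ and showing that a negative term it contains absorbs this $N^2$.

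To compute $D'$ I would apply the \Ito formula to $|\nabla u|^2G_\lambda$. Differentiating \eqref{3.222} in space gives $\mathrm d(\nabla u)-\Delta(\nabla u)\,\mathrm dt=\nabla F\,\mathrm dt+(b\nabla u+u\nabla b)\,\mathrm dW(t)$, whose diffusion coefficient produces the quadratic variation $|b\nabla u+u\nabla b|^2$. After taking expectations (which annihilates the martingale part) and integrating by parts over $B_{R_0}(x_0)$—no boundary terms appear because $u=\chi\varphi$ is compactly supported in $B_{R_0}(x_0)$—one invokes the identities \eqref{G-some} for $\partial_tG_\lambda$, $\nabla G_\lambda$ and $\Delta G_\lambda$. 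The delicate point is that the dimension-dependent contributions (from $\Delta G_\lambda$ and $\mathrm{div}(x-x_0)$) and the $|x-x_0|^2$-weighted contributions cancel exactly, leaving the clean expression
\begin{equation*}
	\begin{split}
		\frac{2D'}{H}={}&\frac{N}{T-t+\lambda}-\frac4H\E\int_{B_{R_0}(x_0)}w^2G_\lambda\,\mathrm dx\\
		&+\frac4H\E\int_{B_{R_0}(x_0)}FwG_\lambda\,\mathrm dx+\frac2H\E\int_{B_{R_0}(x_0)}|b\nabla u+u\nabla b|^2G_\lambda\,\mathrm dx,
	\end{split}
\end{equation*}
where $w:=-\Delta u+\frac{(x-x_0)\cdot\nabla u}{2(T-t+\lambda)}$ is the weighted elliptic expression singled out by $\nabla G_\lambda$. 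The first term is precisely the claimed factor $\frac{1}{T-t+\lambda}N$.

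The heart of the argument is the Cauchy--Schwarz step. Integration by parts gives $D=\E\int u\,w\,G_\lambda\,\mathrm dx$, hence $\E\int u(2w-F)G_\lambda\,\mathrm dx=2D-\E\int uFG_\lambda\,\mathrm dx$. Applying Cauchy--Schwarz to the pair $u$ and $2w-F$ in $L^2(\mathrm d\bP\otimes G_\lambda\,\mathrm dx)$ yields $\E\int(2w-F)^2G_\lambda\,\mathrm dx\ge H^{-1}\bigl(2D-\E\int uFG_\lambda\,\mathrm dx\bigr)^2$. Since $-\frac4H\E\int w^2G_\lambda+\frac4H\E\int FwG_\lambda=-\frac1H\E\int(2w-F)^2G_\lambda+\frac1H\E\int F^2G_\lambda$, this inequality converts the offending part of $\frac{2D'}{H}$ into $-N^2+\frac{2N}H\E\int uFG_\lambda-\frac1{H^2}\bigl(\E\int uFG_\lambda\bigr)^2+\frac1H\E\int F^2G_\lambda$. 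The $-N^2$ cancels the $N^2$ coming from $H'$, the two $\frac{2N}H\E\int uFG_\lambda$ terms cancel, the remaining negative square is discarded, and one is left with the term $\frac1H\E\int F^2G_\lambda\,\mathrm dx$ demanded by \eqref{lemma-2.1-monotonicity}.

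It then remains to control the two $b$-contributions $\frac2H\E\int|b\nabla u+u\nabla b|^2G_\lambda$ and $-\frac NH\E\int b^2u^2G_\lambda$. Using $|b\nabla u+u\nabla b|^2\le 2b^2|\nabla u|^2+2u^2|\nabla b|^2$ and bounding $\|b\|_\infty,\|\nabla b\|_\infty$ on $B_{R_0}(x_0)$ by $\|b\|_{B_{R_0}}$, the first term is at most $2\|b\|^2_{B_{R_0}}\frac{2D}{H}+\text{const}\cdot\|b\|^2_{B_{R_0}}=2\|b\|^2_{B_{R_0}}N+\text{const}\cdot\|b\|^2_{B_{R_0}}$, while the second is nonpositive and may be dropped; collecting terms gives \eqref{lemma-2.1-monotonicity}. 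I expect the main obstacle to be the bookkeeping in the $D'$ computation—specifically verifying the exact cancellation of the dimension-dependent and $|x-x_0|^2$-weighted terms that isolates the single factor $\frac{1}{T-t+\lambda}N$—together with the correct treatment of the \Ito quadratic variation generated by the spatially varying diffusion coefficient $bu$ (which is the source of the $u\nabla b$ terms absent in the deterministic theory); the Cauchy--Schwarz and completion-of-square steps are then routine, as is the final adjustment of the numerical constants.
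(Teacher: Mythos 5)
Your proposal follows essentially the same route as the paper's proof: the paper omits the argument entirely and defers to \cite[Lemma 2.2]{lu2015unique}, and what you reconstruct --- differentiating $N=2D/H$, computing $D'$ by the \Ito formula applied to $|\nabla u|^{2}G_{\lambda}$, the exact cancellation of the dimension-dependent and $|x-x_{0}|^{2}$-weighted terms that isolates $\frac{1}{T-t+\lambda}N$, and the Cauchy--Schwarz step with $w=-\Delta u+\frac{(x-x_{0})\cdot\nabla u}{2(T-t+\lambda)}$ using $D=\E\int u\,w\,G_{\lambda}\,\mathrm dx$ --- is precisely that standard frequency-function computation, augmented by the two stochastic corrections ($|\nabla(bu)|^{2}$ in $D'$ and $b^{2}u^{2}$ in $H'$). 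I checked your $D'$ identity, the $-N^{2}$ cancellation, and the sign of the discarded terms; all are correct.

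The one concrete discrepancy is quantitative: your splitting $|b\nabla u+u\nabla b|^{2}\le 2b^{2}|\nabla u|^{2}+2u^{2}|\nabla b|^{2}$, with $b^{2}$ and $|\nabla b|^{2}$ both bounded by $\|b\|^{2}_{B_{R_0}}$, gives $\frac{2}{H}\E\int_{B_{R_0}(x_{0})}|\nabla(bu)|^{2}G_{\lambda}\,\mathrm dx\le 2\|b\|^{2}_{B_{R_0}}N+4\|b\|^{2}_{B_{R_0}}$, i.e. the zeroth-order constant comes out as $4$, not the $2$ stated in \eqref{lemma-2.1-monotonicity}. To recover exactly $2\|b\|^{2}_{B_{R_0}}$, replace that splitting by the weighted Cauchy--Schwarz inequality $\left(\|b\|_{\infty}|\nabla u|+\|\nabla b\|_{\infty}|u|\right)^{2}\le\left(\|b\|_{\infty}+\|\nabla b\|_{\infty}\right)\left(\|b\|_{\infty}|\nabla u|^{2}+\|\nabla b\|_{\infty}|u|^{2}\right)$, which with the $W^{1,\infty}$ (sum) norm defining $\|b\|_{B_{R_0}}$ yields a bound $\le\|b\|^{2}_{B_{R_0}}N+2\|b\|^{2}_{B_{R_0}}$. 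This shortfall is in any case immaterial for the rest of the paper, since Lemma \ref{lemma-2.2} only uses \eqref{lemma-2.1-monotonicity} up to generic constants; a similar remark applies to the interior $H^{2}$-regularity of $\varphi$ that you implicitly invoke when differentiating \eqref{3.222} in space, which is standard and also taken for granted in the reference.
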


\begin{remark}
	Lemma \ref{lemma-dH} and Lemma \ref{lemma-2.1} were proved in \cite[Lemma 2.1]{lu2015unique} and \cite[Lemma 2.2]{lu2015unique} for a bounded and convex domain. By a similar argument, the same results can be obtained. Hence, we omit the detailed proofs here.
\end{remark}

We then have the following two-ball and one-cylinder inequality, which is inspired by \cite[Theorem 2]{escauriaza2006doubling} and \cite[Lemma 3.2]{duan2020observability}. Its proof here is adapted from  \cite[Lemma 4]{phung2014bang}
by using Lemma \ref{lemma-1.3} instead.
\begin{lemma}\label{lemma-2.2}
	Let $0<r<R<+\infty$ and  $\delta\in(0,1]$.
	Then there are three positive constants $C_{6}:= C_{6}(R,\delta), C_{7}:= C_{7}(R,\delta)$ and $\gamma:=\gamma(r,R,\delta)\in (0,1)$ so that for any $x_{0}\in \mathbb{R}^{N}$ and any $\varphi_{0}\in L^2_{\cF_0}(\Omega;L^2(\mathbb{R}^{N}))$,  the solution $\varphi$ of (\ref{1.1}) satisfies
	\begin{equation*}
		\begin{split}
			&\E\int_{B_{R}(x_{0})}|\varphi(x,T)|^{2}\mathrm{d}x\\
			\leq&\left[C_{6}e^{[1+2C_{1}(1+\frac{1}{R^2})](1+\frac{4}{T}+\|a\|^{2/3}_{\infty}+\|b\|^{2}_{\infty})
				+\frac{C_{7}}{T}+(2\|a\|_{\infty}+\|b\|^{2}_{\infty})T}\E\int_{T/2}^{T}\int_{Q_{2R_{0}}(x_{0})}\varphi^{2}(x,t)\mathrm{d}x\mathrm{d}t\right]^{\gamma}\\
			&\times\left(2\E\int_{B_{r}(x_0)}|\varphi(x,T)|^{2}\mathrm{d}x\right)^{1-\gamma},
		\end{split}
	\end{equation*}
	where $C_{1}$ is the constant given by Lemma~\ref{lemma-1.1}.
\end{lemma}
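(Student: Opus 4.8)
The plan is to adapt the parabolic frequency-function argument of \cite[Lemma 4]{phung2014bang} to the present stochastic setting, working with the localized weighted quantities $H_{\lambda,R_0}$, $D_{\lambda,R_0}$, $N_{\lambda,R_0}$ from \eqref{HDN}, the evolution identity of Lemma \ref{lemma-dH}, and the frequency monotonicity of Lemma \ref{lemma-2.1}. The $B_R$-value at the final time will be controlled through the weighted quantity $H_{\lambda,R_0}(T)$; integrating the logarithmic-derivative identity $\frac{\mathrm{d}}{\mathrm{d}t}\log H_{\lambda,R_0}=-N_{\lambda,R_0}+\text{(corrections)}$ (a consequence of Lemma \ref{lemma-dH}) over a short interval $[T-h_0,T]$ and inserting a bound on the frequency will express $H_{\lambda,R_0}(T)$ in terms of the cylinder quantity $\mathcal{C}:=\E\int_{T/2}^{T}\int_{Q_{2R_0}(x_0)}\varphi^2\,\mathrm{d}x\,\mathrm{d}t$ and the scale $h_0$. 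Finally, Lemma \ref{lemma-1.3} ties $h_0$ to the logarithm of the ratio of $\mathcal{C}$ to the small-ball value $\E\int_{B_r(x_0)}|\varphi(T)|^2\,\mathrm{d}x$, and substituting this relation turns the estimate into the asserted H\"older inequality with some exponent $\gamma\in(0,1)$.

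First I would record the elementary consequences of \eqref{G}: since $\chi\equiv1$ on $B_R(x_0)$ and $G_\lambda(x,T)=\lambda^{-N/2}e^{-|x-x_0|^2/(4\lambda)}$, one has $\lambda^{-N/2}e^{-R^2/(4\lambda)}\E\int_{B_R(x_0)}|\varphi(T)|^2\,\mathrm{d}x\le H_{\lambda,R_0}(T)\le\lambda^{-N/2}\E\int_{B_{R_0}(x_0)}|\varphi(T)|^2\,\mathrm{d}x$, which transfers the weighted quantity to genuine ball integrals. The crucial technical point is the error term $F=au-\varphi\Delta\chi-2\nabla\varphi\cdot\nabla\chi$ entering Lemma \ref{lemma-2.1}: its commutator part $\varphi\Delta\chi+2\nabla\varphi\cdot\nabla\chi$ is supported on the transition annulus $B_{R_0}\setminus B_{(1+3\delta/2)R}$, where $|x-x_0|\ge(1+3\delta/2)R$ forces $G_\lambda(x,t)\le\lambda^{-N/2}e^{-c/(T-t+\lambda)}$ with $c\sim((1+3\delta/2)R)^2/4$. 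This exponentially small factor, together with the Caccioppoli bound of Lemma \ref{lemma-1.1} and the gradient bound of Lemma \ref{lemma-1.2} on the annulus, lets me estimate $\E\int_{B_{R_0}(x_0)}F^2G_\lambda\,\mathrm{d}x/H_{\lambda,R_0}(t)$ by $C\|a\|_\infty^2$ plus an exponentially small multiple of $\mathcal{C}/H_{\lambda,R_0}(t)$.

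With the error term controlled, I would integrate the monotonicity \eqref{lemma-2.1-monotonicity} by a Gronwall argument over $[T-h_0,T]$; here the singular factor $1/(T-t+\lambda)$ is only logarithmically divergent and is absorbed once $\lambda$ is taken comparable to $h_0$, while the stochastic term $2\|b\|^2_{B_{R_0}(x_0)}N_{\lambda,R_0}+2\|b\|^2_{B_{R_0}(x_0)}$ supplies the $\|b\|_\infty^2$-dependence. Lemma \ref{lemma-1.3}, applied with inner radius $R$, cylinder radius $2R_0$, $\tau_2=T/2$ and $\tau_2-\tau_1=T/4$, then provides the lower bound of $H_{\lambda,R_0}(t)$ near $t=T$ by $\mathcal{C}$ and fixes the scale $h_0^{-1}\sim\log(\mathcal{C}/\E\int_{B_r(x_0)}|\varphi(T)|^2\,\mathrm{d}x)+[1+2C_1(1+R^{-2})](1+4T^{-1}+\|a\|_\infty^{2/3}+\|b\|_\infty^2)+C_7T^{-1}+(2\|a\|_\infty+\|b\|_\infty^2)T$. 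Substituting this choice of $h_0$ into the exponentiated log-derivative inequality reproduces exactly the exponential prefactor with constants $C_6(R,\delta)$ and $C_7(R,\delta)$ displayed in the statement, and the constant $\gamma=\gamma(r,R,\delta)\in(0,1)$ is read off from the weights of the resulting convex combination of logarithms.

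The main obstacle is the interplay between the cutoff-induced error $F$ and the Gaussian weight: $\lambda$ (equivalently $h_0$) must be small enough that $G_\lambda$ is exponentially small on the transition annulus, yet large enough that the singular factor $1/(T-t+\lambda)$ in the frequency monotonicity stays integrable and that $G_\lambda(\cdot,T)$ remains comparable to a constant on $B_R(x_0)$. Tracking the precise dependence on $\|a\|_\infty$ and $\|b\|_\infty$ through the Gronwall step, so as to recover the sharp powers $\|a\|_\infty^{2/3}$ and $\|b\|_\infty^2$ in the exponent, is the most delicate bookkeeping, and is exactly where the stochastic correction terms $\E\int b^2u^2G_\lambda$ of Lemmas \ref{lemma-dH} and \ref{lemma-2.1} must be absorbed with care rather than bounded crudely.
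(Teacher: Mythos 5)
Your inventory of ingredients is essentially the paper's (the quantities \eqref{HDN}, Lemmas \ref{lemma-dH} and \ref{lemma-2.1}, exponential smallness of $G_\lambda$ on the support of the cutoff commutator via Lemmas \ref{lemma-1.1}--\ref{lemma-1.2}, a Gronwall step on a window of length $\sim h_0$, the choice $\lambda\sim h_0$, and Lemma \ref{lemma-1.3} to fix $h_0$), but the central mechanism you propose cannot produce the stated inequality. Write $X:=\E\int_{B_R(x_0)}|\varphi(x,T)|^2\,\mathrm{d}x$, $E_r:=\E\int_{B_r(x_0)}|\varphi(x,T)|^2\,\mathrm{d}x$ and $\mathcal{C}:=\E\int_{T/2}^T\int_{Q_{2R_0}(x_0)}\varphi^2\,\mathrm{d}x\,\mathrm{d}t$. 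Your chain is: $X\le e^{R^2/(4\lambda)}\lambda^{N/2}H_{\lambda,R_0}(T)$, then $H_{\lambda,R_0}(T)\lesssim f(h_0)\,\mathcal{C}$ by integrating $\frac{\mathrm{d}}{\mathrm{d}t}\log H_{\lambda,R_0}=-N_{\lambda,R_0}+\text{corrections}$ forward from $T-h_0$ (note that for this upper bound the frequency term $-N_{\lambda,R_0}\le 0$ is simply discarded, so no frequency bound is even used there), and finally the substitution $h_0^{-1}\sim\log(\mathcal{C}/E_r)+\dots$. Denoting by $p>0$ the resulting exponent, this yields
\begin{equation*}
X\;\le\; C\,e^{cR^2/h_0}\,\mathcal{C}\;\le\; C'\,\mathcal{C}^{\,1+p}\,E_r^{-p},
\end{equation*}
in which the observation term carries a \emph{negative} power: the bound becomes vacuous as $E_r\to0$, whereas the claimed H\"older inequality must force $X\to0$. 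This is not a bookkeeping issue; an upper bound for $H_{\lambda,R_0}(T)$ obtained by propagating $H$ forward from $T-h_0$ can never see how small $\varphi(T)$ is on $B_r$, so the small ball cannot enter your chain with the right sign. (If instead you couple $h_0$ to $X$ — which is what "Lemma \ref{lemma-1.3} with inner radius $R$" actually gives, contrary to the ratio with $B_r$ you wrote — your chain collapses to $X\le C\mathcal{C}$, with no observation term at all.)

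What is missing is the step where the frequency bound is genuinely used, namely \emph{at the final time}: the Gaussian-weighted Hardy-type inequality
\begin{equation*}
\frac{1}{16\lambda}\int_{B_{R_0}}|x-x_0|^2|u(x,T)|^2e^{-\frac{|x-x_0|^2}{4\lambda}}\,\mathrm{d}x
\le\frac{N}{4}\int_{B_{R_0}}|u(x,T)|^2e^{-\frac{|x-x_0|^2}{4\lambda}}\,\mathrm{d}x
+\lambda\int_{B_{R_0}}|\nabla u(x,T)|^2e^{-\frac{|x-x_0|^2}{4\lambda}}\,\mathrm{d}x,
\end{equation*}
split over $B_r$ and $B_{R_0}\setminus B_r$ (where $|x-x_0|^2/r^2\ge1$). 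Combined with an upper bound on $\lambda N_{\lambda,R_0}(T)$ — which the paper extracts from a doubling-type estimate on the ratio $H(T-2\varepsilon)/H(T-\varepsilon)$, itself controlled by Lemmas \ref{lemma-1.1} and \ref{lemma-1.3}(ii) — this gives the absorption estimate $\E\int_{B_{R_0}}|u(x,T)|^2e^{-|x-x_0|^2/(4\lambda)}\mathrm{d}x\le2E_r$ once $\lambda=2\mu\varepsilon$ with $\mu$ small and $\varepsilon\sim h_0$. Only then does one multiply by $e^{R^2/(4\lambda)}$; since in this application $h_0$ is defined through the ratio $\mathcal{C}/X$ (inner radius $R$), the factor $e^{cR^2/h_0}$ equals a constant times $(\mathcal{C}/X)^q$, the inequality $X\le 2(\text{const}\cdot\mathcal{C}/X)^q E_r$ is self-referential in $X$, and solving it algebraically is exactly what produces $\gamma=q/(1+q)\in(0,1)$. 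Your proposal pairs $h_0$ with $E_r$ and $H(T)$ with $\mathcal{C}$; the correct argument pairs $h_0$ with $X$ and the weighted big-ball integral at time $T$ with $E_r$, and without that crossing the proof does not close.
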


\begin{remark}
	A similar result is obtained in \cite[Theorem 3.1]{liao2023stability} for the stochastic parabolic equation on a time-varying domain. Their proof is based on the Carleman estimate, while ours is based on the parabolic-type frequency function and quantify the dependence of the constant on the $L^\infty$-norm of the involved potentials.
\end{remark}

\begin{proof}[Proof of Lemma~\ref{lemma-2.2}]
	For each $r'>0$, we denote $B_{r'}:= B_{r'}(x_{0})$ and
	$Q_{r'}:= Q_{r'}(x_{0})$.  
	Furthermore, we define  
	\begin{equation}\label{g}
		g:=-2\nabla\chi\cdot\nabla\varphi-\varphi\Delta\chi.
	\end{equation}

	\textbf{Step 1}. Note that $g$ is supported on $\{x: (1+3\delta/2)R\leq|x-x_{0}|\leq R_{0}\}.$
	Recall that $\chi(\cdot)=1$ in $B_{(1+\delta)R}$ (see \eqref{chi}). We can easily check that
	\begin{equation}\label{3.333}
		\begin{split}
			\displaystyle{}&\frac{\E\int_{B_{R_{0}}} u(x,t)g(x,t)G_{\lambda}(x,t)\mathrm{d}x}
			{H(t)}\\
			=&\displaystyle{}\frac{\E\int_{B_{R_{0}}\setminus B_{(1+3\delta/2)R}}
				\chi\varphi(-2\nabla\chi\cdot\nabla\varphi-\varphi\Delta\chi)
				e^{-\frac{|x-x_{0}|^{2}}{4(T-t+\lambda)}}\mathrm{d}x}
			{\E\int_{B_{R_{0}}}|\chi\varphi(x,t)|^{2}e^{-\frac{|x-x_{0}|^{2}}{4(T-t+\lambda)}}\mathrm{d}x}\\
			\leq& \displaystyle{} e^{-\frac{\mathcal{K}_{1}}{T-t+\lambda}}\frac{\E\int_{B_{R_{0}}\setminus B_{(1+3\delta/2)R}}\left(2|\varphi\nabla\chi\cdot\nabla\varphi|
				+|\Delta\chi|\varphi^{2}\right)\mathrm{d}x}{\E\int_{B_{(1+\delta)R}}\varphi^{2}(x,t)\mathrm{d}x}\\
			\leq& \displaystyle{} e^{-\frac{\mathcal{K}_{1}}{T-t+\lambda}}\frac{2\|\nabla\chi\|_{\infty}
				(\E\int_{B_{R_{0}}}\varphi^{2}(x,t)\mathrm{d}x)^{\frac{1}{2}}(\E\int_{B_{R_{0}}}
				|\nabla\varphi(x,t)|^{2}\mathrm{d}x)^{\frac{1}{2}}+\|\Delta\chi\|_{\infty}\E\int_{B_{R_{0}}}\varphi^{2}(x,t)\mathrm{d}x}
			{\E\int_{B_{(1+\delta)R}}\varphi^{2}(x,t)\mathrm{d}x},
		\end{split}
	\end{equation}
	where $\mathcal{K}_{1}:=[(1+3\delta/2)R]^2/4-[(1+\delta)R]^2/4$ and $\|\nabla\chi\|_{\infty}:=\|\nabla\chi\|_{L^{\infty}(B_{R_{0}})}$ and $\|\Delta\chi\|_{\infty}:=\|\Delta\chi\|_{L^{\infty}(B_{R_{0}})}.$

	On one hand, by Lemma~\ref{lemma-1.1} (where $r, R,\tau_{1} $ and $\tau_{2}$ are
	replaced by
	$R_{0}, 2R_{0}, T/4$ and $T/2$, respectively), we have
	\begin{equation}\label{3.555}
		\E\int_{B_{R_{0}}}\varphi^{2}(x,t)\mathrm{d}x
		\leq \mathcal{K}_{2}(1+T^{-1}+\|a\|_{\infty}+\|b\|_{\infty}^2)\E\int_{T/2}^{T}\int_{B_{2R_{0}}}\varphi^{2}(x,t)
		\mathrm{d}x\mathrm{d}t \  \ \mathrm{for \ each} \ t\in [3T/4,T],
	\end{equation}
	where $\mathcal{K}_{2}:=\mathcal{K}_{2}(R)>0.$
	By Lemma~\ref{lemma-1.2} (where $ R $ and $\tau$ are replaced by  $R_{0}$ and $T/4$, respectively),
	we get that for each $t\in [3T/4,T],$
	\begin{equation}\label{3.666}
		\E\int_{B_{R_{0}}}|\nabla\varphi(x,t)|^{2}\mathrm{d}x\leq \mathcal{K}_{3}(1+T^{-2}+\|a\|^{2}_{\infty}+\|b\|^{4}_{\infty})\E\int_{T/2}^{T}\int_{B_{2R_{0}}}\varphi^{2}(x,s)\mathrm{d}x\mathrm{d}s,
	\end{equation}
	where $\mathcal{K}_{3}:=\mathcal{K}_{3}(R)>0.$
	By \eqref{1.10} in Lemma \ref{lemma-1.3} (where $r, R,\tau_{1} $ and $\tau_{2}$ are replaced by
	$R, 2R_{0}, T/4$ and $T/2$, respectively), it holds that
	\begin{equation}\label{3.888}
		\begin{split}
			\displaystyle{}e^{(2\|a\|_{\infty}+\|b\|_{\infty}^2)T}\E\int_{T/2}^{T}\int_{B_{2R_{0}}}\varphi^{2}\mathrm{d}x\mathrm{d}s\leq& \displaystyle{}e^{(2\|a\|_{\infty}+\|b\|_{\infty}^2)T}\E\int_{T/2}^{T}\int_{Q_{2R_{0}}}\varphi^{2}\mathrm{d}x\mathrm{d}s\\
			\leq& \displaystyle{} e^{1+\frac{C_{5}}{h_{0}}}
			\E\int_{B_{(1+\delta)R}}\varphi^{2}(x,t)\mathrm{d}x \  \ \mathrm{for \ each} \ t\in [T-h_{0},T].
		\end{split}
	\end{equation}
	Here, we used the fact that $h_{0}<T/4$ (see \eqref{1.9} in Lemma \ref{lemma-1.3}).
	It follows from \eqref{3.333}-\eqref{3.888} that
	\begin{equation}\label{3.777}
		\begin{split}
			&\displaystyle{}\frac{\E\int_{B_{R_{0}}} u(x,t)g(x,t)G_{\lambda}(x,t)\mathrm{d}x}
			{H(t)}\\
			\leq& \displaystyle{} e^{-\frac{\mathcal{K}_{1}}{T-t+\lambda}}
			\frac{\mathcal{K}_{4}(1+T^{-2}+\|a\|^{3/2}_{\infty}+\|b\|^{4}_{\infty})
				\E\int_{T/2}^{T}\int_{B_{2R_{0}}}\varphi^{2}(x,s)\mathrm{d}x\mathrm{d}s}
			{\E\int_{B_{(1+\delta)R}}\varphi^{2}(x,t)\mathrm{d}x}\\
			\leq& \displaystyle{} \mathcal{K}_{4}e^{-\frac{\mathcal{K}_{1}}{T-t+\lambda}}e^{1+\frac{C_{5}}{h_{0}}}(1+T^{-2})
			\ \ \  \mathrm{for \ each}  \ \ t\in [T-h_{0},T].
		\end{split}
	\end{equation}
	where $\mathcal{K}_{4}:=\mathcal{K}_{4}(R, \delta)>0.$

	On the other hand, by similar arguments as those for \eqref{3.777}, we have
	\begin{equation}\label{3.111111}
		\begin{split}
			\int_{t}^{T}\frac{\E\int_{B_{R_{0}}} |g(x,s)|^{2}G_{\lambda}(x,s)\mathrm{d}x}{H(s)}\mathrm{d}s
			\leq& \int_{t}^{T}\frac{\E\int_{B_{R_{0}}} |-2\nabla\chi\cdot\nabla\varphi-\varphi\Delta\chi|^{2}\mathrm{d}x}
			{\E\int_{B_{(1+\delta)R}}|\varphi(x,s)|^{2}\mathrm{d}x}e^{-\frac{\mathcal{K}_{1}}{T-s+\lambda}}\mathrm{d}s\\
			\leq&\int_{t}^{T}\frac{8\|\nabla\chi\|^{2}_{\infty}\E\int_{B_{R_{0}}} |\nabla\varphi|^{2}\mathrm{d}x+2\|\Delta\chi\|^{2}_{\infty}
				\E\int_{B_{R_{0}}}\varphi^{2}\mathrm{d}x}{\E\int_{B_{(1+\delta)R}}|\varphi(x,s)|^{2}\mathrm{d}x}
			e^{-\frac{\mathcal{K}_{1}}{T-s+\lambda}}\mathrm{d}s\\
			\leq&\displaystyle{}\mathcal{K}_{5}(1+T^{-2}+\|a\|^{2}_{\infty}+\|b\|^{4}_{\infty})
			\int_{t}^{T}\frac{\E\int_{T/2}^{T}\int_{B_{2R_{0}}}\varphi^{2}\mathrm{d}x\mathrm{d}s}
			{\E\int_{B_{(1+\delta)R}}|\varphi(x,s)|^{2}\mathrm{d}x}e^{-\frac{\mathcal{K}_{1}}{T-s+\lambda}}\mathrm{d}s\\
			\leq&\displaystyle{}\mathcal{K}_{5}(1+T^{-2}+\|a\|^{2}_{\infty}+\|b\|^{4}_{\infty})
			e^{1+\frac{C_{5}}{h_{0}}}e^{-(2\|a\|_{\infty}+\|b\|^{2}_{\infty})T}\int_{t}^{T}e^{-\frac{\mathcal{K}_{1}}{T-s+\lambda}}\mathrm{d}s\\
			\leq&\displaystyle{}\mathcal{K}_{5}(1+T^{-2})
			e^{1+\frac{C_{5}}{h_{0}}}e^{-\frac{\mathcal{K}_{1}}{T-t+\lambda}}(T-t) \ \ \mathrm{for \ each} \  \ t\in [T-h_{0},T],
		\end{split}
	\end{equation}
	where $\mathcal{K}_{5}:=\mathcal{K}_{5}(R,\delta)>0.$
	
	\textbf{Step 2}. In this step, the aim  is to give an upper bound for the term
	$\lambda N_{\lambda,R_{0}}(T)$ (i.e., \eqref{2.9} below).
	By Lemma~\ref{lemma-2.1}, the second equality in \eqref{u-F} and \eqref{g}, we get
	$$
	\frac{\mathrm{d}}{\mathrm{d}t}N_{\lambda,R_0}(t)\leq\left(\frac{1}{T-t+\lambda}+2\|b\|^2_{B_{R_0}}\right)N_{\lambda,R_0}(t)
	+2\|b\|^2_{B_{R_0}}+\frac{\E\int_{B_{R_{0}}}|(au+g)(x,t)|^{2}G_{\lambda}(x,t)\mathrm{d}x}
	{H(t)},
	$$
	which indicates that
	\begin{equation*}
		\begin{split}
			&\frac{\mathrm{d}}{\mathrm{d}t}\left[(T-t+\lambda)N_{\lambda,R_{0}}(t)\right]\\
			\leq&2(T-t+\lambda)\|b\|^2_{B_{R_0}}N_{\lambda,R_{0}}(t)+2(T-t+\lambda)\|b\|^2_{B_{R_0}}+(T-t+\lambda)\frac{\E\int_{B_{R_{0}}}|(au+g)(x,t)|^{2}G_{\lambda}(x,t)\mathrm{d}x}
			{H(t)}\\
			\leq&2(T-t+\lambda)\|b\|^2_{B_{R_0}}N_{\lambda,R_{0}}(t)+2(T-t+\lambda)\|b\|^2_{B_{R_0}}
			+2(T-t+\lambda)
			\left(\|a\|^2_{\infty}+\frac{\E\int_{B_{R_{0}}}|g(x,t)|^{2}G_{\lambda}(x,t)\mathrm{d}x}
			{H(t)}\right),
		\end{split}
	\end{equation*}
	this, along with Gronwall's inequality implies that
	\begin{equation*}
		\begin{split}
			\lambda N_{\lambda,R_{0}}(T)\leq&(T-t+\lambda)N_{\lambda,R_{0}}(t)e^{2\|b\|^2_{B_{R_0}}(T-t)}+2e^{2\|b\|^2_{B_{R_0}}(T-t)}
			\left(\|a\|^{2}_{\infty}+\|b\|^2_{B_{R_0}}\right)\int_{t}^{T}(T-s+\lambda)\mathrm{d}s\\
			&+2e^{2\|b\|^2_{B_{R_0}}(T-t)}\int_{t}^{T}(T-s+\lambda)\frac{\E\int_{B_{R_{0}}}|g(x,s)|^{2}G_{\lambda}(x,s)\mathrm{d}x}
			{H(s)}\mathrm{d}s.
		\end{split}
	\end{equation*}
	Hence, for any $0<T-2\varepsilon\leq t<T$ (where $\varepsilon$ will be determined later), we have
	\begin{equation}\label{3.121212}
		\begin{split}
			\frac{\lambda}{2\varepsilon+\lambda}N_{\lambda,R_{0}}(T)\leq &N_{\lambda,R_{0}}(t)e^{4\|b\|^2_{B_{R_0}}\varepsilon}+4e^{4\|b\|^2_{B_{R_0}}\varepsilon}
			\left(\|a\|^{2}_{\infty}+\|b\|^2_{B_{R_0}}\right)\varepsilon\\
			&+4e^{4\|b\|^2_{B_{R_0}}\varepsilon}\int_{t}^{T}
			\frac{\E\int_{B_{R_{0}}}|g(x,s)|^{2}G_{\lambda}(x,s)\mathrm{d}x}
			{H(s)}\mathrm{d}s,
		\end{split}
	\end{equation}
	this, along with Lemma~\ref{lemma-dH}, \eqref{u-F} and \eqref{g} implies that
	\begin{equation}\label{3.131313}
		\begin{split}
			&\displaystyle{}\frac{\mathrm{d}}{\mathrm{d}t}
			H(t)+\frac{\lambda}
			{2\varepsilon+\lambda}N_{\lambda,R_{0}}(T)H(t)\\
			\leq&e^{4\|b\|^2_{B_{R_0}}\varepsilon}\left(2\|a\|_{\infty}+\|b\|^{2}_{B_{R_0}}+4\varepsilon\|a\|^{2}_{\infty}+4\varepsilon\|b\|^{2}_{B_{R_0}}\right)
			H(t)\\
			&+H(t)e^{4\|b\|^2_{B_{R_0}}\varepsilon}\left(\frac{\E\int_{B_{R_{0}}}u(x,t)g(x,t)G_{\lambda}(x,t)\mathrm{d}x}
			{H(t)}+2\int_{t}^{T}
			\frac{\E\int_{B_{R_{0}}}|g(x,s)|^{2}G_{\lambda}(x,s)\mathrm{d}x}
			{H(s)}\mathrm{d}s\right).
		\end{split}
	\end{equation}
	
	Next, on one hand, it follows from  (\ref{3.777}) and (\ref{3.111111}) that
	\begin{equation}\label{2.7}
		\begin{split}
			&\displaystyle{}\frac{\E\int_{B_{R_{0}}}u(x,t)g(x,t)G_{\lambda}(x,t)\mathrm{d}x}
			{H(t)}+2\int_{t}^{T}
			\frac{\E\int_{B_{R_{0}}}|g(x,s)|^{2}G_{\lambda}(x,s)\mathrm{d}x}
			{H(s)}\mathrm{d}s\\
			\leq& \mathcal{K}_{6}\left(1+2\varepsilon\right)\left(1+T^{-2}\right)
			e^{-\frac{\mathcal{K}_{1}}{2\varepsilon+\lambda}}e^{\frac{C_{5}+\mathcal{K}_{1}}{h_{0}}}\\
			:=&Q_{h_{0},\varepsilon,\lambda}
			\ \ \mathrm{for\ each} \ 0<T-2\varepsilon\leq t<T \ \mathrm{ with} \  2\varepsilon\in (0,h_{0}],
		\end{split}
	\end{equation}
	where $\mathcal{K}_{6}:=\mathcal{K}_{6}(R,\delta)>0.$
	This, along with (\ref{3.131313}), implies that
	\begin{equation*}
		\begin{split}
			\frac{\mathrm{d}}{\mathrm{d}t}H(t)
			\leq -\left(\frac{\lambda}{2\varepsilon+\lambda}N_{\lambda,R_{0}}(T)-
			e^{4\|b\|^2_{B_{R_0}}\varepsilon}\left(2\|a\|_{\infty}+\|b\|^{2}_{B_{R_0}}+4\varepsilon\|a\|^{2}_{\infty}+4\varepsilon\|b\|^{2}_{B_{R_0}}
			+Q_{h_{0},\varepsilon,\lambda}\right)\right)
			H(t),
		\end{split}
	\end{equation*}
	which indicates that
	$$\frac{\mathrm{d}}{\mathrm{d}t}\left[e^{\left(\frac{\lambda}
		{2\varepsilon+\lambda}N_{\lambda,R_{0}}(T)-
		e^{4\|b\|^2_{B_{R_0}}\varepsilon}\left(2\|a\|_{\infty}+\|b\|^{2}_{B_{R_0}}+4\varepsilon\|a\|^{2}_{\infty}+4\varepsilon\|b\|^{2}_{B_{R_0}}
		+Q_{h_{0},\varepsilon,\lambda}\right)\right)t}H(t)\right]\leq 0
	$$
	for each $0<T-2\varepsilon\leq t<T$ with $2\varepsilon\in (0,h_{0}]$.
	Integrating the above inequality over $(T-2\varepsilon,T-\varepsilon)$, we obtain
	\begin{equation*}
		\begin{split}
			e^{\frac{\varepsilon\lambda}{2\varepsilon+\lambda}N_{\lambda,R_{0}}(T)}
			H(T-\varepsilon)
			\leq e^{e^{4\|b\|^2_{B_{R_0}}\varepsilon}\left(2\|a\|_{\infty}+\|b\|^{2}_{B_{R_0}}+4\varepsilon\|a\|^{2}_{\infty}+4\varepsilon\|b\|^{2}_{B_{R_0}}
				+Q_{h_{0},\varepsilon,\lambda}\right)\varepsilon}H(T-2\varepsilon).
		\end{split}
	\end{equation*}
	This yields
	\begin{equation}\label{3.151515}
		\begin{split}
			e^{\frac{\varepsilon\lambda}{2\varepsilon+\lambda}N_{\lambda,R_{0}}(T)}\leq e^{e^{4\|b\|^2_{B_{R_0}}\varepsilon}\left(2\|a\|_{\infty}+\|b\|^{2}_{B_{R_0}}+4\varepsilon\|a\|^{2}_{\infty}+4\varepsilon\|b\|^{2}_{B_{R_0}}
				+Q_{h_{0},\varepsilon,\lambda}\right)\varepsilon}
			\frac{\E\int_{B_{R_{0}}}| u(x,T-2\varepsilon)|^{2}e^{-\frac{|x-x_{0}|^{2}}{4(2\varepsilon+\lambda)}}\mathrm{d}x}{\E\int_{B_{R_{0}}}| u(x,T-\varepsilon)|^{2}e^{-\frac{|x-x_{0}|^{2}}{4(\varepsilon+\lambda)}}\mathrm{d}x}.
		\end{split}
	\end{equation}

	On the other hand, by \eqref{chi}, the first equality in \eqref{u-F}, \eqref{3.555} and noting that $e^{-\frac{|x-x_{0}|^{2}}{4(2\varepsilon+\lambda)}}\leq1$, $R_0>(1+\delta)R$ and $B_{2R_0}\subset Q_{2R_{0}}$, we see
	\begin{equation*}
		\begin{split}
			&\frac{\E\int_{B_{R_{0}}}| u(x,T-2\varepsilon)|^{2}
				e^{-\frac{|x-x_{0}|^{2}}{4(2\varepsilon+\lambda)}}\mathrm{d}x}{\E\int_{B_{R_{0}}}| u(x,T-\varepsilon)|^{2}e^{-\frac{|x-x_{0}|^{2}}{4(\varepsilon+\lambda)}}\mathrm{d}x}
			\leq\frac{e^{\frac{((1+\delta)R)^{2}}{4(\varepsilon+\lambda)}}\E\int_{B_{R_{0}}}| \varphi(x,T-2\varepsilon)|^{2}\mathrm{d}x}{\E\int_{B_{(1+\delta)R}}| \varphi(x,T-\varepsilon)|^{2}\mathrm{d}x}\\
			\leq&\frac{e^{\frac{((1+\delta)R)^{2}}{4\varepsilon}}\mathcal{K}_{2}(1+T^{-1}+\|a\|_{\infty}+\|b\|^2_{\infty})
				\E\int_{T/2}^{T}\int_{Q_{2R_{0}}}\varphi^{2}(x,t)\mathrm{d}x\mathrm{d}t}
			{\E\int_{B_{(1+\delta)R}}| \varphi(x,T-\varepsilon)|^{2}\mathrm{d}x},
		\end{split}
	\end{equation*}
	which, combined with $(ii)$ of Lemma~\ref{lemma-1.3}
	(where $r, R,\tau_{1} $ and $\tau_{2}$ are replaced by  $R, 2R_{0}, T/4$ and $T/2$, respectively),
	indicates that
	\begin{equation}\label{3.161616}
		\begin{split}
			\frac{\E\int_{B_{R_{0}}}| u(x,T-2\varepsilon)|^{2}
				e^{-\frac{|x-x_{0}|^{2}}{4(2\varepsilon+\lambda)}}\mathrm{d}x}{\E\int_{B_{R_{0}}}| u(x,T-\varepsilon)|^{2}e^{-\frac{|x-x_{0}|^{2}}{4(\varepsilon+\lambda)}}\mathrm{d}x}\leq& \displaystyle{}\frac{e^{\frac{((1+\delta)R)^{2}}{4\varepsilon}}\mathcal{K}_{2}(1+T^{-1}
				+\|a\|_{\infty}+\|b\|^2_{\infty})e^{1+\frac{C_{5}}{h_{0}}}}{e^{(2\|a\|_{\infty}+\|b\|^2_{\infty})T}}\\
			\leq&\displaystyle{}\mathcal{K}_{2}e^{\frac{((1+\delta)R)^{2}}{4\varepsilon}}\left(1+T^{-1}\right)e^{1+\frac{C_{5}}{h_{0}}}.
		\end{split}
	\end{equation}
	Then, it follows from (\ref{3.151515}) and (\ref{3.161616}) that for each  $\varepsilon\in (0,h_{0}/2]$,
	\begin{equation}\label{3.171717}
		\begin{split}
			\lambda N_{\lambda,R_{0}}(T)
			\leq&
			\displaystyle{\frac{2\varepsilon+\lambda}{\varepsilon}}
			\bigg[e^{4\|b\|^2_{B_{R_0}}\varepsilon}\left(2\|a\|_{\infty}+\|b\|^{2}_{B_{R_{0}}}+4\varepsilon\|a\|^{2}_{\infty}+4\varepsilon\|b\|^{2}_{B_{R_{0}}}
			+Q_{h_{0},\varepsilon,\lambda}\right)\varepsilon
			\\
			&+\displaystyle{\frac{(1+\delta)^2 R^2}{4\varepsilon}}+1+\displaystyle{\frac{C_{5}}{h_{0}}}+\ln\left(\mathcal{K}_{2}\left(1+T^{-1}\right)\right)\bigg].
		\end{split}
	\end{equation}
	Finally, we choose $\lambda=2\mu\varepsilon$ with $\mu\in(0,1)$ (which will be determined later) and $2\varepsilon=\mathcal{K}_{1}h_{0}/[2(C_{5}+\mathcal{K}_{1})]$ so that
	$Q_{h_{0},\varepsilon,\lambda}$ (see (\ref{2.7})) satisfies
	\begin{equation}\label{3.181818}
		\begin{split}
			Q_{h_{0},\varepsilon,\lambda}=&\mathcal{K}_{6}\left(1+2\varepsilon\right)\left(1+T^{-2}\right)
			e^{-\frac{\mathcal{K}_{1}}{2\varepsilon+\lambda}}e^{\frac{C_{5}+\mathcal{K}_{1}}{h_{0}}}
			=\mathcal{K}_{6}\left(1+2\varepsilon\right)\left(1+T^{-2}\right)e^{-\frac{2(C_5+\mathcal{K}_1)}{h_0(\mu+1)}}e^{\frac{C_{5}+\mathcal{K}_{1}}{h_{0}}}\\
			=&\mathcal{K}_{6}
			\left(1+2\varepsilon\right)\left(1+T^{-2}\right)e^{\frac{C_{5}+\mathcal{K}_{1}}{h_{0}}(\frac{\mu-1}{\mu+1})}
			\leq \mathcal{K}_{6}\left(1+2\varepsilon\right)\left(1+T^{-2}\right).
		\end{split}
	\end{equation}
	Since $2\varepsilon\leq h_{0}$, by  (\ref{3.171717}) and (\ref{3.181818}), we get
	\begin{equation}\label{3.191919}
		\begin{split}
			\lambda N_{\lambda,R_{0}}(T)\leq& 4e^{2h_0\|b\|^{2}_{\infty}}\left(h_{0}\|a\|_{\infty}+\frac{1}{2}h_0\|b\|^{2}_{\infty}+h_{0}^{2}\|a\|^{2}_{\infty}+h_0^2\|b\|^{2}_{\infty}
			+\frac{1}{2}\mathcal{K}_{6}\left(1+2\varepsilon\right)\left(1+T^{-2}\right)h_{0}\right) \\
			&+4\left[1+\frac{C_{5}}{h_{0}}+\mathcal{K}_{2}
			\left(1+T^{-1}\right)+\frac{C_{5}+\mathcal{K}_{1}}{\mathcal{K}_{1}h_{0}}(1+\delta)^2 R^2\right].
		\end{split}
	\end{equation}

	According to $(i)$ of Lemma~\ref{lemma-1.3}
	(where $r, R,\tau_{1} $ and $\tau_{2}$ are replaced by  $R, 2R_{0}, T/4$ and $T/2$, respectively),
	it is clear that
	$$h_{0}<C_{3},\ h_{0}<T,\ h_{0}T\|a\|_{\infty}<C_{3},\ h_{0}\|b\|^{2}_{\infty}<C_{3}, \ h^{3}_{0}\|b\|^{2}_{\infty}<C^{2}_{3} \ \mathrm{and} \ h^{3}_{0}\|a\|^{2}_{\infty}<C^{3}_{3}.
	$$
	These, together with (\ref{3.191919}), derive that
	\begin{equation*}
		\begin{split}
			\varepsilon\lambda N_{\lambda,R_{0}}(T)\leq&4\varepsilon e^{2h_0\|b\|^{2}_{\infty}}\left(h_{0}\|a\|_{\infty}+\frac{1}{2}h_0\|b\|^{2}_{\infty}+h_{0}^{2}\|a\|^{2}_{\infty}
			+h_0^2\|b\|^{2}_{\infty}
			+\frac{1}{2}\mathcal{K}_{6}\left(1+2\varepsilon\right)\left(1+T^{-2}\right)h_{0}\right) \\
			&+4\varepsilon\left[1+\frac{C_{5}}{h_{0}}+\mathcal{K}_{2}
			\left(1+T^{-1}\right)+\frac{C_{5}+\mathcal{K}_{1}}{\mathcal{K}_{1}h_{0}}(1+\delta)^2 R^2\right]\\
			\leq&4e^{2h_0\|b\|^{2}_{\infty}}\left(h_{0}T\|a\|_{\infty}+h_{0}^{3}\|a\|^{2}_{\infty}
			+h_0^3\|b\|^{2}_{\infty}
			+\frac{1}{2}\mathcal{K}_{6}\left(1+2h_0\right)\left(1+T^{-2}\right)h_{0}^2\right) \\
			&+4\left[h_{0}+C_{5}+\mathcal{K}_{2}h_{0}\left(1+T^{-1}\right)
			+\frac{C_{5}+\mathcal{K}_{1}}{\mathcal{K}_{1}}(1+\delta)^2 R^{2}\right]\\ 
			\leq&2e^{2C_3}\left[2C_{3}+ 2C_{3}^{3}+2C_3^2+\mathcal{K}_{6}\left(1+2C_{3}\right)\left(1+C_{3}^{2}\right) \right]\\
			&+4\left[C_{3}+C_{5}+\mathcal{K}_{2}\left(1+C_{3}\right)
			+\frac{C_{5}+\mathcal{K}_{1}}{\mathcal{K}_{1}}(1+\delta)^2 R^{2}\right].
		\end{split}
	\end{equation*}
	Hence, recalling that $\lambda=2\mu\varepsilon$, we have
	\begin{equation}\label{2.9}
		\frac{16\lambda}{r^{2}}\left(\frac{N}{4}+\frac{1}{2}\lambda N_{\lambda,R_{0}}(T)\right)
		\leq\frac{16\mu}{r^{2}}\left(\frac{N}{2}C_{3}+\varepsilon\lambda N_{\lambda,R_{0}}(T)\right)\leq2\mu(1+\mathcal{K}_{7}),
	\end{equation}
	where $\mathcal{K}_{7}:=\mathcal{K}_{7}(r,R,\delta, {N})>0.$
	
	\textbf{Step 3}. We claim that
	\begin{equation}\label{3.212121}
		\begin{split}
			\displaystyle{} \E\int_{B_{R_{0}}}| u(x,T)|^{2}e^{-\frac{|x-x_{0}|^{2}}{4\lambda}}\mathrm{d}x
			\leq\displaystyle{}\E\int_{B_{r}}|\varphi(x,T)|^{2}e^{-\frac{|x-x_{0}|^{2}}{4\lambda}}\mathrm{d}x
			\displaystyle{}+2\mu(1+\mathcal{K}_{7})\E\int_{B_{R_{0}}}|u(x,T)|^{2}e^{-\frac{|x-x_{0}|^{2}}{4\lambda}}\mathrm{d}x.
		\end{split}
	\end{equation}
	Indeed, noting that $u$ is $H^1(B_{R_0})$-value, by  \cite[page 1951]{duan2020observability}, also see \cite{escauriaza2006doubling,phung2010quantitative,phung2014bang}. we have
	\begin{equation*}
		\begin{split}
			&\frac{1}{16\lambda}\int_{B_{R_{0}}}|x-x_{0}|^{2}| u(x,T)|^{2}e^{-\frac{|x-x_{0}|^{2}}{4\lambda}}\mathrm{d}x\\
			\leq&\frac{N}{4}\int_{B_{R_{0}}}|u(x,T)|^{2}e^{-\frac{|x-x_{0}|^{2}}{4\lambda}}\mathrm{d}x+\lambda\int_{B_{R_{0}}}|\nabla u(x,T)|^{2}e^{-\frac{|x-x_{0}|^{2}}{4\lambda}}\mathrm{d}x\ \ \ \bP\mathrm{-a.s.\ in \ }B_{R_{0}}.
		\end{split}
	\end{equation*}
	This implies
	\begin{equation}\label{3.222222}
		\begin{split}
			&\displaystyle{}\E\int_{B_{R_{0}}}|u(x,T)|^{2}e^{-\frac{|x-x_{0}|^{2}}{4\lambda}}\mathrm{d}x\\
			\leq&\displaystyle{} \E\int_{B_{R_{0}}\setminus B_{r}}
			\frac{|x-x_{0}|^{2}}{r^{2}}| u(x,T)|^{2}e^{-\frac{|x-x_{0}|^{2}}{4\lambda}}\mathrm{d}x
			\displaystyle{}+\E\int_{B_{r}}|u(x,T)|^{2}e^{-\frac{|x-x_{0}|^{2}}{4\lambda}}\mathrm{d}x\\
			\leq&\displaystyle{}\frac{16\lambda}{r^{2}}
			\left[\frac{N}{4}+\frac{1}{2}\lambda N_{\lambda,R_{0}}(T)\right]\E\int_{B_{R_{0}}}|u(x,T)|^{2}
			e^{-\frac{|x-x_{0}|^{2}}{4\lambda}}\mathrm{d}x
			+\displaystyle{}\E\int_{B_{r}}|\varphi(x,T)|^{2}e^{-\frac{|x-x_{0}|^{2}}{4\lambda}}\mathrm{d}x  ,
		\end{split}
	\end{equation}
	where in the last line, we used the definition of $N_{\lambda,R_{0}}(T)$ (see \eqref{HDN})
	and the fact that $u=\varphi$ in $B_{r}$ (see \eqref{chi} and \eqref{u-F}).
	Then (\ref{3.212121}) follows from  (\ref{3.222222}) and  (\ref{2.9}) immediately.
	
	\textbf{Step 4}. End of the proof.
	We choose $\mu=1/[2(1+\mathcal{K}_{7})]$. Then, $\lambda=2\mu\varepsilon=\mathcal{K}_{1}h_{0}/[4(1+\mathcal{K}_{7})(C_{5}+\mathcal{K}_{1})].$
	By \eqref{3.212121}, $e^{-\frac{|x-x_0|^2}{4\lambda}}\leq1$ and the fact that $u=\varphi$ in $B_R$ (see \eqref{chi} and \eqref{u-F}), we have
	\begin{equation*}
		\begin{split}
			e^{-\frac{R^2}{4\lambda}}\E\int_{B_{R}}|\varphi(x,T)|^{2}\mathrm{d}x\leq& \E\int_{B_{R}}|\varphi(x,T)|^{2}e^{-\frac{|x-x_{0}|^{2}}{4\lambda}}\mathrm{d}x=
			\E\int_{B_{R_{0}}}|u(x,T)|^{2}e^{-\frac{|x-x_{0}|^{2}}{4\lambda}}\mathrm{d}x\\
			\leq&\displaystyle{}\E\int_{B_{r}}|\varphi(x,T)|^{2}e^{-\frac{|x-x_{0}|^{2}}{4\lambda}}\mathrm{d}x
			\displaystyle{}+\E\int_{B_{R_{0}}}|u(x,T)|^{2}e^{-\frac{|x-x_{0}|^{2}}{4\lambda}}\mathrm{d}x\\
			\leq&2\E\int_{B_{r}}|\varphi(x,T)|^{2}\mathrm{d}x.
		\end{split}
	\end{equation*}
	This, along with the definition of $h_{0}$ (see (\ref{1.8}),
	where $r, R,\tau_{1} $ and $\tau_{2}$ are replaced by  $R, 2R_{0}, T/4$ and $T/2$, respectively), implies that
	\begin{equation*}
		\begin{split}
			&\E\int_{B_{R}}|\varphi(x,T)|^{2}dx
			\leq2e^{\frac{(1+\mathcal{K}_{7})(C_{5}+\mathcal{K}_{1})R^{2}}
				{\mathcal{K}_{1}h_{0}}}\E\int_{B_{r}}|\varphi(x,T)|^{2}\mathrm{d}x\\
			\leq& \Big[ (1+C_{4})\left(e^{[1+2C_{1}(1+R^{-2})]
				(1+4T^{-1}+\|a\|^{2/3}_{\infty}+\|b\|^2_{\infty})+\frac{4C_{3}}{T}
				+(2\|a\|_{\infty}+\|b\|^2_{\infty})T}\right)\\
			&\frac{\E\int_{T/2}^{T}\int_{Q_{2R_{0}}}\varphi^{2}\mathrm{d}x\mathrm{d}t}
			{\E\int_{B_{R}}|\varphi(x,T)|^{2}\mathrm{d}x} \Big] ^{\frac{(1+\mathcal{K}_{7})(C_{5}+\mathcal{K}_{1})R^{2}}
				{\mathcal{K}_{1}C_{3}}}\times2\E\int_{B_{r}}|\varphi(x,T)|^{2}\mathrm{d}x.
		\end{split}
	\end{equation*}
	Hence, we can conclude that the desired estimate of Lemma~\ref{lemma-2.2} holds  with
	$$
	\gamma=\frac{(1+\mathcal{K}_{7})(C_{5}+\mathcal{K}_{1})R^{2}}
	{C_{3}\mathcal{K}_{1}+(1+\mathcal{K}_{7})(C_{5}+\mathcal{K}_{1})R^{2}}\in(0,1).
	$$

	In summary, we finish the proof of this lemma.
\end{proof}

Finally, based on Lemma \ref{lemma-2.2}, we are ready to prove Theorem \ref{2.3}.
\begin{proof}[\textbf{Proof of Theorem \ref{2.3}}]
	By Lemma~\ref{lemma-2.2} (where $r, R$ and $\delta$ are replaced by  $r, \sqrt{N}R$ and $1/2$, respectively), we obtain
	\begin{equation*}
		\begin{split}
			&\E\int_{Q_{R}(x_{i})}|\varphi(x,T)|^{2}\mathrm{d}x
			\leq \E\int_{B_{\sqrt{N}R}(x_{i})}|\varphi(x,T)|^{2}\mathrm{d}x\\
			\leq&\bigg[\mathcal{\widehat{K}}_{1}e^{[1+2C_{1}(1+R^{-2})](1+4T^{-1}+
				\|a\|^{2/3}_{\infty}+\|b\|_{\infty}^2)+\mathcal{\widehat{K}}_{2}T^{-1}
				+(2\|a\|_{\infty}+\|b\|_{\infty}^2)T}\E\int_{T/2}^{T}\int_{Q_{4\sqrt{N}R}(x_{i})}\varphi^{2}\mathrm{d}x\mathrm{d}t\bigg]^{\theta}\\
			&\times\left[2\E\int_{B_{r}(x_{i})}|\varphi(x,T)|^{2}\mathrm{d}x\right]^{1-\theta},
		\end{split}
	\end{equation*}
	where $\mathcal{\widehat{K}}_{1}:=\mathcal{\widehat{K}}_{1}(R)>0,
	\mathcal{\widehat{K}}_{2}:=\mathcal{\widehat{K}}_{2}(R)>0$ and
	$\theta:=\theta(r,R)\in (0,1).$ This, along with Young's  inequality,
	implies that for each $\varepsilon>0,$
	\begin{equation*}
		\begin{split}
			\E\int_{Q_{R}(x_{i})}|\varphi(x,T)|^{2}\mathrm{d}x
			\leq&\varepsilon\theta\mathcal{\widehat{K}}_{1}e^{[1+2C_{1}(1+R^{-2})]
				(1+4T^{-1}+\|a\|^{2/3}_{\infty}+\|b\|^{2}_{\infty})+\mathcal{\widehat{K}}_{2}T^{-1}+(2\|a\|_{\infty}+\|b\|^{2}_{\infty})T}\\
			&\times\E\int_{T/2}^{T}\int_{Q_{4\sqrt{N}R}(x_{i})}\varphi^{2}\mathrm{d}x\mathrm{d}t
			+2\varepsilon^{-\frac{\theta}{1-\theta}}(1-\theta)\E\int_{B_{r}(x_{i})}
			|\varphi(x,T)|^{2}\mathrm{d}x.
		\end{split}
	\end{equation*}
	Then
	\begin{equation}\label{3.44444}
		\begin{split}
			&\displaystyle{}\E\int_{\mathbb{R}^{N}}|\varphi(x,T)|^{2}\mathrm{d}x
			=\sum_{i\geq1}\E\int_{Q_{R}(x_{i})}|\varphi(x,T)|^{2}\mathrm{d}x\\
			\leq&\displaystyle{}\varepsilon\theta\mathcal{\widehat{K}}_{1}
			e^{[1+2C_{1}(1+R^{-2})]
				(1+4T^{-1}+\|a\|^{2/3}_{\infty}+\|b\|^{2}_{\infty})+\mathcal{\widehat{K}}_{2}T^{-1}+(2\|a\|_{\infty}+\|b\|^{2}_{\infty})T}\\
			&\sum_{i\geq 1}\E\int_{T/2}^{T}\int_{Q_{4\sqrt{N}R}(x_{i})}\varphi^{2}\mathrm{d}x\mathrm{d}t\displaystyle{}+2\varepsilon^{-\frac{\theta}{1-\theta}}(1-\theta)\E\int_{\omega}|\varphi(x,T)|^{2}\mathrm{d}x.
		\end{split}
	\end{equation}
	Since
	\begin{equation*}
		\sum_{i\geq1}\E\int_{T/2}^{T}\int_{Q_{4\sqrt{N}R}(x_{i})}\varphi^{2}
		\mathrm{d}x\mathrm{d}t\leq \mathcal{\widehat{K}}_{3} \E\int_{T/2}^{T}
		\int_{\mathbb{R}^{N}}\varphi^{2}\mathrm{d}x\mathrm{d}t,
	\end{equation*}
	where $\mathcal{\widehat{K}}_{3}>0$, it follows from (\ref{3.44444}) that
	\begin{equation*}
		\begin{split}
			\E\int_{\mathbb{R}^{N}}|\varphi(x,T)|^{2}\mathrm{d}x
			\leq&\varepsilon\theta\mathcal{\widehat{K}}_{1}\mathcal{\widehat{K}}_{3}
			e^{[1+2C_{1}(1+R^{-2})]
				(1+4T^{-1}+\|a\|^{2/3}_{\infty}+\|b\|^{2}_{\infty})+\mathcal{\widehat{K}}_{2}T^{-1}+(2\|a\|_{\infty}+\|b\|^{2}_{\infty})T}
			\\ &\times\E\int_{T/2}^{T}\int_{\mathbb{R}^{N}}\varphi^{2}\mathrm{d}x\mathrm{d}t+2\varepsilon^{-\frac{\theta}{1-\theta}}(1-\theta)
			\E\int_{\omega}|\varphi(x,T)|^{2}\mathrm{d}x  \ \mathrm{for \ each} \ \varepsilon>0.
		\end{split}
	\end{equation*}
	This implies
	\begin{equation}\label{3.444440}
		\begin{split}
			&\displaystyle{}\E\int_{\mathbb{R}^{N}}|\varphi(x,T)|^{2}\mathrm{d}x\\
			\leq&\displaystyle{}\left[\mathcal{\widehat{K}}_{1}\mathcal{\widehat{K}}_{3}
			e^{[1+2C_{1}(1+R^{-2})]
				(1+4T^{-1}+\|a\|^{2/3}_{\infty}+\|b\|^{2}_{\infty})+\mathcal{\widehat{K}}_{2}T^{-1}+(2\|a\|_{\infty}+\|b\|^{2}_{\infty})T}
			\E\int_{T/2}^{T}\int_{\mathbb{R}^{N}}\varphi^{2}\mathrm{d}x\mathrm{d}t\right]^{\theta}\\
			&\displaystyle{}\times\left[2\E\int_{\omega}|\varphi(x,T)|^{2}\mathrm{d}x\right]^{1-\theta}.
		\end{split}
	\end{equation}

	Applying the \Ito formula to $\varphi^2$ and then taking expectation and by Gronwall's inequality,  we have the energy estimate of the equation \eqref{1.1}:
	\begin{equation}\label{energy}
		\E\int_{\mathbb{R}^N} |\varphi(x,t)|^2\,\mathrm dx\leq e^{(2\|a\|_{\infty}+\|b\|_{\infty}^2)t}
		\E\int_{\mathbb{R}^N} |\varphi_0(x)|^2\,\mathrm dx\  \ \ \mathrm{for \  each} \  t\in [0,T].
	\end{equation}
	Finally, by (\ref{3.444440}), we deduce
	\begin{equation*}
		\begin{split}
			\E\int_{\mathbb{R}^{N}}|\varphi(x,T)|^{2}\mathrm{d}x
			\leq&\bigg[\mathcal{\widehat{K}}_{1}\mathcal{\widehat{K}}_{3}
			Te^{[1+2C_{1}(1+R^{-2})](1+4T^{-1}+\|a\|^{2/3}_{\infty}+\|b\|_{\infty}^2)
				+\mathcal{\widehat{K}}_{2}T^{-1}+2(2\|a\|_{\infty}+\|b\|_{\infty}^2)T}
			\\
			&\times\E\int_{\mathbb{R}^{N}}|\varphi_{0}(x)|^{2}\mathrm{d}x\bigg]^{\theta}\times\left[2\E\int_{\widetilde{\omega}}|\varphi(x,T)|^{2}\mathrm{d}x\right]^{1-\theta}.
		\end{split}
	\end{equation*}
	Hence, (\ref{3.33333}) follows from the latter inequality immediately.
\end{proof}

\section{Proof\ of\ Corollary~\ref{Thm1}}\label{pro}
Now, we are able to present the proof of Corollary~\ref{Thm1} by Theorem \ref{2.3} and the telescoping series method (see \cite{apraiz2014observability,phung2013observability}). For the convenience of the reader, we provide here the detailed computation.

\noindent\textbf{Proof\ of\ Corollary~\ref{Thm1}}.
For any $0\leq t_{1}<t_{2}\leq T$, by using Theorem~\ref{2.3}, we obtain from Young's inequality that
\begin{equation}\label{2019-7-9}
	\E\|\varphi(t_{2})\|^{2}_{L^{2}(\mathbb{R}^{N})}\leq\varepsilon
	\E\|\varphi(t_{1})\|^{2}_{L^{2}(\mathbb{R}^{N})}+
	\frac{\mathcal{\widetilde{K}}_{1}}{\varepsilon^{\alpha}}e^{\frac{\mathcal{\widetilde{K}}_{2}}{t_{2}-t_{1}}}
	\E\|\varphi(t_{2})\|^{2}_{L^{2}(\omega)} \ \ \ \mathrm{for\ each}\ \varepsilon>0,
\end{equation}
where $\mathcal{\widetilde{K}}_{1}:= e^{\frac{C_8}{1-\theta}
	\left(T+T(\|a\|_{\infty}+\|b\|_{\infty}^2)+\|a\|_{\infty}^{2/3}+\|b\|_{\infty}^2\right)},$
$\mathcal{\widetilde{K}}_{2}:= C_{8}/(1-\theta)$
and $\alpha:=\theta/(1-\theta)$.

Let $l$ be a density point of $E$. According to Proposition 2.1 in \cite{phung2013observability},
for each $\kappa>1$, there exists $l_{1}\in (l,T)$, depending on $\kappa$ and $E$,
so that the sequence $\{l_{m}\}_{m\geq1}$, given by
$$
l_{m+1}=l+\frac{1}{\kappa^{m}}(l_{1}-l),
$$
satisfies
\begin{equation}\label{3.2525251}
	l_{m}-l_{m+1}\leq 3|E\cap(l_{m+1},l_{m})|.
\end{equation}

Next, let $0<l_{m+2}<l_{m+1}\leq t<l_{m}<l_{1}<T$. It follows from (\ref{2019-7-9}) (where $t_1$, $t_2$ are replaced by $l_{m+2}$ and $t$, respectively) that
\begin{equation}\label{3.2525252}
	\E\|\varphi(t)\|^{2}_{L^{2}(\mathbb{R}^{N})}\leq \varepsilon\E\|\varphi(l_{m+2})\|^{2}_{L^{2}(\mathbb{R}^{N})}
	+\frac{\mathcal{\widetilde{K}}_{1}}{\varepsilon^{\alpha}}
	e^{\frac{\mathcal{\widetilde{K}}_{2}}{t-l_{m+2}}}\E\|\varphi(t)\|^{2}_{L^{2}(\omega)} \ \mathrm{for\ each}\ \varepsilon>0.
\end{equation}
Similar to \eqref{energy},  we have
$$
\E\|\varphi(l_{m})\|_{L^{2}(\mathbb{R}^{N})}\leq e^{(2\|a\|_{\infty}+\|b\|_{\infty}^2)T}\E\|\varphi(t)\|_{L^{2}(\mathbb{R}^{N})}.
$$
This, along with (\ref{3.2525252}), implies for each $\varepsilon>0$,
$$
\E\|\varphi(l_{m})\|^{2}_{L^{2}(\mathbb{R}^{N})}\leq e^{(2\|a\|_{\infty}+\|b\|_{\infty}^2)T}\left(\varepsilon\E\|\varphi(l_{m+2})
\|^{2}_{L^{2}(\mathbb{R}^{N})}+
\frac{\mathcal{\widetilde{K}}_{1}}{\varepsilon^{\alpha}}
e^{\frac{\mathcal{\widetilde{K}}_{2}}{t-l_{m+2}}}\E\|\varphi(t)\|^{2}_{L^{2}(\omega)}\right),
$$
which indicates that
$$\E\|\varphi(l_{m})\|^{2}_{L^{2}(\mathbb{R}^{N})}
\leq \varepsilon\E\|\varphi(l_{m+2})\|^{2}_{L^{2}(\mathbb{R}^{N})}
+\frac{\mathcal{\widetilde{K}}_{3}}{\varepsilon^{\alpha}}
e^{\frac{\mathcal{\widetilde{K}}_{2}}{t-l_{m+2}}}\E\|\varphi(t)\|^{2}_{L^{2}(\omega)}
\ \mathrm{for\ each}\ \varepsilon>0,
$$
where $\mathcal{\widetilde{K}}_{3}=(e^{(2\|a\|_{\infty}+\|b\|_{\infty}^2)T})^{1+\alpha}\mathcal{\widetilde{K}}_{1}$.
Integrating the latter inequality over $ E\cap(l_{m+1},l_{m})$ gives
\begin{equation}\label{3.2525253}
	\begin{split}
		\displaystyle{}|E\cap(l_{m+1},l_{m})|\E\|\varphi(l_{m})\|^{2}_{L^{2}(\mathbb{R}^{N})}
		\leq&\displaystyle{}\varepsilon |E\cap(l_{m+1},l_{m})|\E\|\varphi(l_{m+2})\|^{2}_{L^{2}(\mathbb{R}^{N})}\\
		&\displaystyle{}+\frac{\mathcal{\widetilde{K}}_{3}}
		{\varepsilon^{\alpha}}e^{\frac{\mathcal{\widetilde{K}}_{2}}{l_{m+1}-l_{m+2}}}
		\E\int_{l_{m+1}}^{l_{m}}\chi_{E}\|\varphi(t)\|^{2}_{L^{2}(\omega)}\mathrm{d}t
		\;\ \mathrm{for\ each}\ \varepsilon>0.
	\end{split}
\end{equation}
Here and in the sequel, $\chi_{E}$ denotes the characteristic function of $E$.

Since $l_{m}-l_{m+1}=(\kappa-1)(l_{1}-l)/\kappa^{m},$ by (\ref{3.2525253}) and (\ref{3.2525251}), we obtain
\begin{equation*}
	\begin{split}
		\E\|\varphi(l_{m})\|^{2}_{L^{2}(\mathbb{R}^{N})}
		\leq& \frac{1}{|E\cap(l_{m+1},l_{m})|}
		\frac{\mathcal{\widetilde{K}}_{3}}{\varepsilon^{\alpha}}
		e^{\frac{\mathcal{\widetilde{K}}_{2}}{l_{m+1}-l_{m+2}}}
		\E\int_{l_{m+1}}^{l_{m}}\chi_{E}\|\varphi(t)\|^{2}_{L^{2}(\omega)}\mathrm{d}t+\varepsilon \E\|\varphi(l_{m+2})\|^{2}_{L^{2}(\mathbb{R}^{N})}\\
		\leq&\frac{3\kappa^{m}}{(l_{1}-l)(\kappa-1)}
		\frac{\mathcal{\widetilde{K}}_{3}}{\varepsilon^{\alpha}}
		e^{\mathcal{\widetilde{K}}_{2}\left(\frac{1}{l_{1}-l}\frac{\kappa^{m+1}}{\kappa-1}\right)}
		\E\int_{l_{m+1}}^{l_{m}}\chi_{E}\|\varphi(t)\|^{2}_{L^{2}(\omega)}\mathrm{d}t+
		\varepsilon \E\|\varphi(l_{m+2})\|^{2}_{L^{2}(\mathbb{R}^{N})}
	\end{split}
\end{equation*}
for each $\varepsilon>0$.
This yields
\begin{equation}\label{3.2525254}
	\begin{split}
		\displaystyle{}\E\|\varphi(l_{m})\|^{2}_{L^{2}(\mathbb{R}^{N})}\leq& \displaystyle{}
		\frac{1}{\varepsilon^{\alpha}}\frac{3}{\kappa}
		\frac{\mathcal{\widetilde{K}}_{3}}{\mathcal{\widetilde{K}}_{2}}
		e^{2\mathcal{\widetilde{K}}_{2}\left(\frac{1}{l_{1}-l}
			\frac{\kappa^{m+1}}{\kappa-1}\right)}\E\int_{l_{m+1}}^{l_{m}}\chi_{E}
		\|\varphi(t)\|^{2}_{L^{2}(\omega)}\mathrm{d}t\displaystyle{}+
		\varepsilon \E\|\varphi(l_{m+2})\|^{2}_{L^{2}(\mathbb{R}^{N})}
	\end{split}
\end{equation}
for each $\varepsilon>0$.
Denote by $d:= 2\mathcal{\widetilde{K}}_{2}/[\kappa(l_{1}-l)(\kappa-1)]$.
It follows from \eqref{3.2525254} that
\begin{equation*}
	\begin{split}
		\varepsilon^{\alpha}e^{-d\kappa^{m+2}}\E\|\varphi(l_{m})\|^{2}_{L^{2}(\mathbb{R}^{N})}
		-\varepsilon^{1+\alpha}e^{-d\kappa^{m+2}}\E\|\varphi(l_{m+2})\|^{2}_{L^{2}(\mathbb{R}^{N})}
		\leq\frac{3}{\kappa}\frac{\mathcal{\widetilde{K}}_{3}}
		{\mathcal{\widetilde{K}}_{2}}\E\int_{l_{m+1}}^{l_{m}}\chi_{E}\|\varphi(t)\|^{2}_{L^{2}(\omega)}\mathrm{d}t
	\end{split}
\end{equation*}
for each $\varepsilon>0$.

Choosing $\varepsilon=e^{-d\kappa^{m+2}}$ in the above inequality gives
\begin{equation}\label{3.25252555}
	\begin{split}
		\displaystyle{}e^{-(1+\alpha)d\kappa^{m+2}}\E\|\varphi(l_{m})\|^{2}_{L^{2}(\mathbb{R}^{N})}
		-e^{-(2+\alpha)d\kappa^{m+2}}\E\|\varphi(l_{m+2})\|^{2}_{L^{2}(\mathbb{R}^{N})}
		\leq\displaystyle{}\frac{3}{\kappa}\frac{\mathcal{\widetilde{K}}_{3}}
		{\mathcal{\widetilde{K}}_{2}}\E\int_{l_{m+1}}^{l_{m}}\chi_{E}\|\varphi(t)\|^{2}_{L^{2}(\omega)}\mathrm{d}t.
	\end{split}
\end{equation}
Taking $\kappa=\sqrt{(\alpha+2)/(\alpha+1)}$ in \eqref{3.25252555}, we then have
\begin{eqnarray*}
	e^{-(2+\alpha)d\kappa^{m}}\E\|\varphi(l_{m})\|^{2}_{L^{2}(\mathbb{R}^{N})}
	-e^{-(2+\alpha)d\kappa^{m+2}}\E\|\varphi(l_{m+2})\|^{2}_{L^{2}(\mathbb{R}^{N})}
	\leq \frac{3}{\kappa}\frac{\mathcal{\widetilde{K}}_{3}}{\mathcal{\widetilde{K}}_{2}}
	\E\int_{l_{m+1}}^{l_{m}}\chi_{E}\|\varphi(t)\|^{2}_{L^{2}(\omega)}\mathrm{d}t.
\end{eqnarray*}
Changing $m$ to $2m'$ and summing the above inequality from $m'=1$ to infinity give the desired result. Indeed,
\begin{equation*}
	\begin{split}
		&e^{-(2\|a\|_{\infty}+\|b\|_{\infty}^2)T}e^{-(2+\alpha)d\kappa^{2}}\E\|\varphi(T)\|^{2}_{L^{2}(\mathbb{R}^{N})}
		\leq e^{-(2+\alpha)d\kappa^{2}}\E\|\varphi(l_{2})\|^{2}_{L^{2}(\mathbb{R}^{N})}\\
		\leq&\sum_{m'=1}^{+\infty}\left(e^{-(2+\alpha)d\kappa^{2m'}}\E\|\varphi(l_{2m'})\|_{L^{2}(\mathbb{R}^{N})}
		-e^{-(2+\alpha)d\kappa^{2m'+2}}\E\|\varphi(l_{2m'+2})\|^{2}_{L^{2}(\mathbb{R}^{N})}\right)\\
		\leq& \frac{3}{\kappa}\frac{\mathcal{\widetilde{K}}_{3}}{\mathcal{\widetilde{K}}_{2}}\sum_{m'=1}^{+\infty}
		\E\int_{l_{2m'+1}}^{l_{2m'}}\chi_{E}\|\varphi(t)\|^{2}_{L^{2}(\omega)}\mathrm{d}t
		\leq \frac{3}{\kappa}\frac{\mathcal{\widetilde{K}}_{3}}
		{\mathcal{\widetilde{K}}_{2}}\E\int_{0}^{T}\chi_{E}\|\varphi(t)\|^{2}_{L^{2}(\omega)}\mathrm{d}t.
	\end{split}
\end{equation*}

In summary, we finish the proof of Corollary~\ref{Thm1}.
\qed

\section{Further comments}

\subsection{Controllability for the backward stochastic parabolic equation}
One could obtain the null controllability result for the backward stochastic parabolic equations by the classical duality argument as in \cite[Theorem 2.2]{tang2009null} or \cite[Theorem 1.12]{lu2015unique}. 

Given $ T>0 $, consider the following controlled backward stochastic heat equation
\begin{equation}\label{bspde}
	\left\{
	\begin{split}
		&\mathrm{d}y+\Delta y \mathrm{d}t=a_1 y \mathrm{d}t+ b_1 Y \mathrm{d}t + \chi_{E} \chi_{\omega} u \mathrm{d}t + Y \mathrm{d}W(t),   \quad &&\mathrm{in}\  \mathbb{R}^{N}\times (0,T),\\
		&y(T)=y_{T}, \quad
		&&\mathrm{in}\ \mathbb{R}^{N}.
	\end{split}\right.
\end{equation}
Here $ y_T \in L^2_{\cF_T}(\Omega;L^2(\mathbb{R}^{N})) $, $a_1 \in L^\infty_{\F}(0,+\infty;L^\infty(\R^N))$, $b_1 \in L^\infty_\F(0,+\infty;W^{1,\infty}(\R^N))$ and $ u \in L^2_{\F}(0,+\infty;\\L^2(\R^N)) $ is the control.
According to \cite[Theorem 4.10]{lv2021mathematical}, the system \eqref{bspde} has a unique solution $ (y(\cdot),Y(\cdot)) \in L^2_{\F}(\Omega;C([0,T];L^2(\mathbb{R}^{N}))) \cap L^2_{\F}(0,T;H_0^1 (\mathbb{R}^{N})) \times L^2_{\F}(0,T;L^2 (\mathbb{R}^{N})) $. 

We say system \eqref{bspde} is null controllable if for any $ y_T \in L^2_{\cF_T}(\Omega;L^2(\mathbb{R}^{N})) $, there exists a control $ u \in L^2_{\F}(0,+\infty;L^2(\R^N)) $ such that the solution of the system \eqref{bspde} with terminal state $ y_T $ and control $ u $ satisfying that $ y(0)=0 $. 
We have the following result.
\begin{corollary}
	Under the assumption of Theorem~\ref{2.3}, the system \eqref{bspde} is null controllable.
\end{corollary}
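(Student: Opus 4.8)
The plan is to deduce the null controllability of \eqref{bspde} from the observability estimate of Corollary~\ref{Thm1} by the classical duality argument (cf.\ \cite[Theorem 2.2]{tang2009null}, \cite[Theorem 1.12]{lu2015unique}). The first step is to identify the adjoint equation. Given the potentials $a_1,b_1$ of \eqref{bspde}, I consider the forward equation \eqref{1.1} with $a:=-a_1$ and $b:=-b_1$; since $-a_1\in L^\infty_{\F}(0,+\infty;L^\infty(\R^N))$ and $-b_1\in L^\infty_{\F}(0,+\infty;W^{1,\infty}(\R^N))$, Corollary~\ref{Thm1} applies verbatim and produces a finite constant $\mathcal{C}>0$ (depending on $r,R,E$ and the norms of $a_1,b_1$) such that
\[ \E\int_{\R^N}|\varphi(x,T)|^2\,\mathrm dx\le \mathcal{C}\,\E\int_{\omega\times E}|\varphi(x,t)|^2\,\mathrm dx\,\mathrm dt \]
for every solution $\varphi$ of this adjoint equation with datum $\varphi_0\in L^2_{\cF_0}(\Omega;L^2(\R^N))$.

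Next I would establish the duality identity. Applying the \Ito formula to $y\varphi$, integrating over $\R^N\times(0,T)$ and taking expectation, the second-order terms cancel after integration by parts (with no boundary contribution on $\R^N$, since $y,\varphi\in L^2_{\F}(0,T;H^1(\R^N))$), while the zero-order drift terms and the cross-variation term $b_1 Y\varphi-b_1Y\varphi$ cancel precisely because of the choice $a=-a_1$, $b=-b_1$. One is thereby left with
\[ \E\int_{\R^N}y(T)\varphi(T)\,\mathrm dx-\E\int_{\R^N}y(0)\varphi(0)\,\mathrm dx=\E\int_0^T\!\!\int_{\omega}\chi_E\,u\,\varphi\,\mathrm dx\,\mathrm dt . \]
Since $y(T)=y_T$, driving the solution to $y(0)=0$ is equivalent to finding $u$ so that $\E\int_{\R^N}y_T\,\varphi(T)\,\mathrm dx=\E\int_0^T\int_\omega\chi_E u\varphi\,\mathrm dx\,\mathrm dt$ holds for every adjoint datum $\varphi_0$.

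To produce such a control I would minimize the quadratic functional
\[ J(\varphi_0)=\frac12\,\E\int_0^T\!\!\int_\omega\chi_E|\varphi|^2\,\mathrm dx\,\mathrm dt+\E\int_{\R^N}y_T\,\varphi(T)\,\mathrm dx \]
over $\varphi_0\in L^2_{\cF_0}(\Omega;L^2(\R^N))$. By the energy estimate \eqref{energy} the map $\varphi_0\mapsto\varphi$ is bounded, so $J$ is continuous, and it is evidently convex; its coercivity with respect to the observation seminorm $(\E\int_{\omega\times E}|\varphi|^2)^{1/2}$ follows from the observability inequality, because the linear term is dominated by $(\E\|y_T\|^2_{L^2})^{1/2}(\E\|\varphi(T)\|^2_{L^2})^{1/2}\le \mathcal C^{1/2}(\E\|y_T\|^2_{L^2})^{1/2}(\E\int_{\omega\times E}|\varphi|^2)^{1/2}$. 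Writing $\hat\varphi$ for the adjoint solution associated with a minimizer $\hat\varphi_0$, the Euler--Lagrange identity reads $\E\int_0^T\int_\omega\chi_E\hat\varphi\psi+\E\int_{\R^N}y_T\psi(T)=0$ for every adjoint solution $\psi$. Taking $u:=-\hat\varphi$ (so that $\chi_E\chi_\omega u=-\chi_E\chi_\omega\hat\varphi$) and substituting into the duality identity forces $\E\int_{\R^N}y(0)\psi(0)\,\mathrm dx=0$ for all $\psi_0\in L^2_{\cF_0}(\Omega;L^2(\R^N))$, whence $y(0)=0$, which is exactly null controllability.

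The main obstacle is the existence of the minimizer: coercivity of $J$ holds only in the observation seminorm, not in the $L^2_{\cF_0}(\Omega;L^2(\R^N))$-norm, so the direct method does not apply on $L^2_{\cF_0}(\Omega;L^2(\R^N))$ as it stands. I would resolve this in the standard way, by carrying out instead a penalized minimization of $J_\varepsilon(\varphi_0):=J(\varphi_0)+\varepsilon(\E\|\varphi_0\|_{L^2}^2)^{1/2}$, deriving an $\varepsilon$-uniform bound on the controls $u_\varepsilon=-\hat\varphi_\varepsilon$ directly from the (fixed, finite) observability constant $\mathcal C$, and passing to a weak limit as $\varepsilon\to0$; equivalently, one completes $L^2_{\cF_0}(\Omega;L^2(\R^N))$ to a Hilbert space under the observation seminorm, on which $J$ becomes coercive and the continuous linear term admits a Riesz representative. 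A secondary technical point, absent from the bounded-domain references, is the rigorous justification of the integration by parts in the duality identity over the whole space; this I would handle by a cut-off/approximation argument exploiting the $H^1(\R^N)$-regularity of $y$ and $\varphi$ together with the Gaussian-type spatial decay of the solutions.
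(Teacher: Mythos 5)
Your proposal is correct and reaches the result, but the key step --- producing the control from the observability inequality --- is carried out by a genuinely different mechanism than in the paper. The common ground: both arguments use the same adjoint system \eqref{dbspde} (the forward equation \eqref{1.1} with potentials $-a_1$, $-b_1$), invoke Corollary \ref{Thm1} for it, and conclude via the same It\^{o}-formula duality identity; your worry about justifying the integration by parts on $\R^N$ is not a real issue, since $y,\hat y\in L^2_{\F}(0,T;H^1(\R^N))$ makes the second-order terms pair as an $H^{-1}$--$H^1$ duality with no boundary contribution, and the paper applies the It\^{o} formula directly. Where you minimize the HUM functional $J$ --- and must then repair the lack of coercivity in $L^2_{\cF_0}(\Omega;L^2(\R^N))$ by penalization ($J_\varepsilon$, $\varepsilon$-uniform bounds, weak limit) or by completion under the observation seminorm --- the paper avoids minimization entirely: it regards $\hat y|_{\omega\times E}\mapsto -\E\int_{\R^N}\hat y(T)y_T\,\mathrm{d}x$ as a linear functional on the subspace $\mathcal{X}=\left\{\hat y|_{\omega\times E}\right\}\subset L^2_{\F}(0,T;L^2(\omega))$, observes that Corollary \ref{Thm1} makes this functional both well defined and bounded with respect to the $L^2(\omega\times E)$ norm, extends it by Hahn--Banach with the same norm, and takes the Riesz representative $\hat u$ (extended by zero outside $\omega\times E$) as the control; the duality identity then forces $\E\int_{\R^N}\hat y_0\,y(0)\,\mathrm{d}x=0$ for every adjoint datum $\hat y_0$, hence $y(0)=0$. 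The paper's route buys brevity and sidesteps exactly the obstacle you flag (no minimizer, no limiting procedure, and the control-norm bound comes for free from the norm-preserving extension); your variational route, once the penalization details are written out, buys a more constructive characterization of the control as the minimal-norm one, which is the standard HUM picture. Both are valid; yours simply demands additional work precisely at the point you yourself identified.
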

\begin{proof}
	Consider the following equation:
	\begin{equation}\label{dbspde}
		\left\{ \begin{array}{lll}
			\mathrm{d}\hat{y}-\Delta\hat{y} \mathrm{d}t=-a_1 \hat{y} \mathrm{d}t - b_1 \hat{y} \mathrm{d}W(t),    &\mathrm{in}\  \mathbb{R}^{N}\times (0,T),\\
			\hat{y}(0)=\hat{y}_{0}\in L^2_{\cF_0}(\Omega;L^2(\mathbb{R}^{N}))  &\mathrm{in}\ \mathbb{R}^{N}.
		\end{array}\right.
	\end{equation}
	We introduce a linear subspace of $ L^2_{\mathbb{F}}(0,T;L^2(\omega)) $:
	$$ \mathcal{X}\triangleq\left\{\hat{y}|_{\omega\times E}: \hat{y} \text{ solves equation }\eqref{dbspde}\right\}, $$
	and define a linear functional $ \mathcal{L} $ on $ \mathcal{X} $ as follows:
	$$ \mathcal{L}(\hat{y}|_{\omega\times E}) = -\E\int_{\R^N} \hat{y}(T)y_T \mathrm{d}x.$$
	By Corollary~\ref{Thm1}, we have that
	\begin{equation*}
		\begin{split}
			|\mathcal{L}(\hat{y}|_{\omega\times E})| &\leqslant \|\hat{y}(T)\|_{L^2_{\cF_T}(\Omega;L^2(\mathbb{R}^{N}))}\|y_T \|_{L^2_{\cF_T}(\Omega;L^2(\mathbb{R}^{N}))}\\
			&\leqslant e^{\widetilde{C}_1 }e^{C_1 \left(T +T (\|a_1 \|_{\infty}+\|b_1 \|_{\infty}^2)+\|a_1 \|_{\infty}^{2/3}+\|b_1 \|^2_{\infty}+1\right)} \|y_T \|_{L^2_{\cF_T}(\Omega;L^2(\mathbb{R}^{N}))} 
			\left(\E\int_{\omega\times E}|\hat{y}(x,t)|^{2}\mathrm dx\mathrm dt\right)^{\frac{1}{2}}.
		\end{split}
	\end{equation*}
	Therefore, $ \mathcal{L} $ is a bounded linear functional on $ \mathcal{X} $.
	By the Hahn–Banach theorem, $\mathcal{L} $ can be extended to a bounded linear functional with the same norm on $ L^2_{\mathbb{F}}(0,T;L^2(\omega)) $.  
	For simplicity, we use the same notation for this extension. 
	By the Riesz representation theorem, there exists a stochastic process $ \hat{u} \in L^2_{\mathbb{F}}(0,T;L^2(\omega)) $ such that 
	\begin{equation}\label{riesz}
		\E\int_{\omega\times E} \hat{y}\hat{u} \mathrm dx\mathrm dt = \E\int_{\R^N} \hat{y}(T)y_T \mathrm{d}x.
	\end{equation}
	Let \begin{equation*}
		u(x,t) = \left\{\begin{split}
			&\hat{u}(x,t),\quad &&(x,t) \in \omega\times E, \\ &0, \quad &&\text{else}.
		\end{split}\right.
	\end{equation*}
	Then it is obvious that $ u \in L^2_{\mathbb{F}}(0,+\infty;L^2(\R^N)) $, and we claim that this $ u $ is the control we need. In fact, for any $ y_T \in L^2_{\cF_T}(\Omega;L^2(\mathbb{R}^{N})) $, for the solution $ \hat{y} $ of equation \eqref{dbspde} and the solution $ (y, Y) $ of equation \eqref{bspde}, by the \Ito formula, we have that 
	\begin{equation}\label{riesz1}
		\begin{split}
			\E \int_{\R^N}\hat{y}(T)y(T)\mathrm{d}x&- \E \int_{\R^N}\hat{y}_0 y(0)\mathrm{d}x \\
			&= \E \int_0^T \int_{\R^N}\left[\hat{y}(-\Delta y+a_1 y+b_1 Y + \chi_{E}\chi_{\omega}u)+y(\Delta \hat{y}-a_1 \hat{y})-b_1 \hat{y}Y\right] \mathrm{d}x\mathrm{d}t\\
			&= \E \int_0^T \int_{\R^N} \hat{y}\chi_{E}\chi_{\omega}u \mathrm{d}x\mathrm{d}t\\
			&= \E\int_{\omega\times E} \hat{y}\hat{u} \mathrm dx\mathrm dt.
		\end{split}
	\end{equation}
	Combining \eqref{riesz} and \eqref{riesz1}, we get that 
	\begin{equation*}
		\E \int_{\R^N}\hat{y}_0 y(0)\mathrm{d}x = 0.
	\end{equation*}
	Since $ \hat{y}_0 $ can be chosen arbitrarily, we know that $ y(0) = 0, \bP\mathrm{-a.s.\ in \ }\R^N $.
\end{proof}

\subsection{Controllability for the forward stochastic parabolic equation}
The observability inequality for the solution of forward stochastic parabolic equation we obtained here cannot imply the controllability result for the same forward stochastic parabolic equation, because the solutions of the forward and backward stochastic parabolic equations are not equivalent. 
In fact, the concept of controllability for the forward stochastic parabolic equation is much more complicated than the deterministic couterpart, which usually involves a control in the diffusion term of the equation.
For this topic, we refer \cite{Barbu03, tang2009null, lu2011some, hernandez23} to the interesting reader.

\section{Appendix}
\noindent\textbf{Proof\ of\ Lemma~\ref{lemma-1.1}}.
For simplicity, we may write  $B_{r}:=B_{r}(x_{0})$ and  $B_{R}:= B_{R}(x_{0}).$
Let $\eta\in C_{0}^{\infty}(B_{R})$ verifies
\begin{equation}\label{2.21111}
	0\leq\eta(\cdot)\leq 1 \ \mathrm{in} \ B_{R}, \ \eta(\cdot)=1 \ \mathrm{in} \ B_{r} \
	\mathrm{and} \  |\nabla \eta(\cdot)|\leq C(R-r)^{-1}.
\end{equation}
Here and throughout the proof of Lemma~\ref{lemma-1.1},
$C$ denotes a generic positive constant. Let $\xi\in C^{\infty}(\mathbb{R})$ satisfy
\begin{equation}\label{2.31111}
	0\leq\xi(\cdot)\leq1, \ |\xi'(\cdot)|\leq C(\tau_{2}-\tau_{1})^{-1} \ \ \
	\mathrm{in} \ \ \mathbb{R},
\end{equation}
\begin{equation}\label{2.41111}
	\xi(\cdot)=0 \ \mathrm{in} \ (-\infty, T-\tau_{2}] \ \mathrm{and} \ \xi(\cdot)=1 \
	\mathrm{in} \ [T-\tau_{1}, +\infty).
\end{equation}
Applying the \Ito formula to $\eta^{2}\xi^{2}\varphi^2$, we have
\begin{equation*}
	\begin{split}
		d(\eta^{2}\xi^{2}\varphi^2)
		&=  2\xi\xi'\eta^{2}\varphi^2dt+2\eta^{2}\xi^{2}\varphi\cdot\left[\Delta\varphi dt+a\varphi dt+ b\varphi dW(t)\right]+\eta^{2}\xi^{2}b^2\varphi^2dt.
	\end{split}
\end{equation*}
Integrating the above equality over  $B_{R}\times(T-\tau_{2},t)$ for  $t\in [T-\tau_{1},T]$ and taking the expectation, noting that $\xi(T-\tau_2)=0$,   we obtain that
\begin{equation}\label{2.51111}
	\begin{split}
		\E\int_{B_{R}}\eta^{2}\xi^{2}(t)\varphi^{2}(x,t) \mathrm{d}x=&\E\int_{T-\tau_2}^t\int_{B_{R}}\left[2\xi\xi'\eta^{2}\varphi^2+2\eta^{2}\xi^{2}\varphi\cdot(\Delta\varphi +a\varphi)+\eta^{2}\xi^{2}b^2\varphi^2   \right]\mathrm{d}x\mathrm{d}s\\
		=&2\E\int_{T-\tau_2}^t\int_{B_{R}}\xi\xi'\eta^{2}\varphi^2\mathrm{d}x\mathrm{d}s
		+2\E\int_{T-\tau_2}^t\int_{B_{R}}\eta^{2}\xi^{2}\varphi\cdot\Delta\varphi\mathrm{d}x\mathrm{d}s\\
		&+\E\int_{T-\tau_2}^t\int_{B_{R}}\left(2a\eta^{2}\xi^{2}\varphi^2+\eta^{2}\xi^{2}b^2\varphi^2\right)\mathrm{d}x\mathrm{d}s.
	\end{split}
\end{equation}
Notice that
\begin{equation*}
	\begin{split}
		2\E\int_{T-\tau_2}^t\int_{B_{R}}\eta^{2}\xi^{2}\varphi\cdot\Delta\varphi\mathrm{d}x\mathrm{d}s=-4\E\int_{T-\tau_2}^t\int_{B_{R}}\xi^2\eta\varphi
		\nabla\eta\cdot\nabla\varphi\mathrm{d}x\mathrm{d}s-2\E\int_{T-\tau_2}^t\int_{B_{R}}\eta^2\xi^2|\nabla\varphi|^2\mathrm{d}x\mathrm{d}s,
	\end{split}
\end{equation*}
and by \eqref{2.51111} and Young's inequality, we have
\begin{equation}\label{2.511112}
	\begin{split}
		&\E\int_{B_{R}}\eta^{2}\xi^{2}(t)\varphi^{2}(x,t) \mathrm{d}x+\E\int_{T-\tau_2}^t\int_{B_{R}}\eta^2\xi^2|\nabla\varphi|^2\mathrm{d}x\mathrm{d}s\\
		\leq&4\E\int_{T-\tau_{2}}^{t}\int_{B_{R}} |\nabla\eta|^{2}\xi^{2}\varphi^{2} \mathrm{d}x\mathrm{d}s
		+2\E\int_{T-\tau_{2}}^{t}\int_{B_{R}} \eta^{2}\xi\xi'\varphi^{2} \mathrm{d}x\mathrm{d}s\\
		&+\E\int_{T-\tau_2}^t\int_{B_{R}}\left(2a\eta^{2}\xi^{2}\varphi^2+\eta^{2}\xi^{2}b^2\varphi^2\right)\mathrm{d}x\mathrm{d}s,
	\end{split}
\end{equation}
This, along with  (\ref{2.21111})-(\ref{2.41111}), implies that
\begin{equation*}
	\begin{split}
		&\E\int_{B_{r}}\varphi^{2}(x,t) \mathrm{d}x+\E\int_{T-\tau_{1}}^{t}\int_{B_{r}} |\nabla\varphi|^{2} \mathrm{d}x\mathrm{d}s\\
		\leq&C\left[(R-r)^{-2}+(\tau_{2}-\tau_{1})^{-1}+\|a\|_{\infty}+\|b\|_{\infty}^2\right]
		\E\int_{T-\tau_{2}}^{T}\int_{B_{R}}\varphi^{2} \mathrm{d}x\mathrm{d}s,\ \  \mathrm{for\ each} \ t\in [T-\tau_{1},T].
	\end{split}
\end{equation*}
Hence, (\ref{1.2}) follows from the last inequality immediately.
\qed

\medskip
\noindent\textbf{Proof\ of\ Lemma~\ref{lemma-1.2}}.
For each $r'>0,$ we write $B_{r'}:= B_{r'}(x_{0})$.  Let $\eta\in C_{0}^{\infty}(B_{4R/3})$ satisfies
\begin{equation}\label{2.61111}
	0\leq \eta(\cdot)\leq 1,\ |\nabla\eta(\cdot)|\leq CR^{-1}, \ |\Delta \eta(\cdot)|\leq CR^{-2}\ \mathrm{ in} \ B_{4R/3}
\end{equation}
and
\begin{equation}\label{2.61112}
	\eta(\cdot)=1 \ \mathrm{ in} \ B_{R}.
\end{equation}
Here and throughout the
proof of Lemma~\ref{lemma-1.2}, $C$ denotes a generic positive constant.
Let $\xi\in C^{\infty}(\mathbb{R})$ verifies
\begin{equation}\label{2.61113}
	0\leq \xi(\cdot)\leq1,\ |\xi'(\cdot)|\leq C\tau^{-1} \ \mathrm{ in} \ \mathbb{R},
\end{equation}
\begin{equation}\label{2.61114}
	\xi(\cdot)=0 \ \mathrm{in} \ (-\infty, T-4\tau/3] \ \mathrm{and } \ \xi(\cdot)=1  \ \mathrm{in} \ [T-\tau, +\infty).
\end{equation}

Applying the \Ito formula to $\frac{1}{2}\eta^{2}\xi^{2}\varphi^2_i$, where $ \varphi_i = \partial_{x_i} \varphi $,
integrating over  $B_{4R/3 }\times(T-4\tau/3,t)$ for  $t\in [T-\tau,T]$, taking the expectation, and noting that $\xi(T-4\tau/3)=0$, $ \eta(\cdot)\equiv0 \;\text{on} \;\partial B_{4R/3}$. Similar to the calculation of \eqref{2.511112}, we obtain that
\begin{equation}
	\begin{split}
		&\E\int_{B_{4R/3}}\eta^{2}(x)\xi^{2}(t)\varphi^{2}_i (x,t) \mathrm{d}x+\E\int_{T-4\tau/3}^t\int_{B_{4R/3}}(\xi \eta \nabla \varphi_i ) ^2 \mathrm{d}x\mathrm{d}s\\
		&\leq 2\E\int_{T-4\tau/3}^t\int_{B_{4R/3}}\xi\xi'\eta^{2}\varphi^2_i \mathrm{d}x\mathrm{d}s+4\E \int_{T-4\tau/3}^t\int_{B_{4R/3}} (\xi \varphi_i \nabla \eta)^2 \mathrm{d}x\mathrm{d}s\\
		&\quad+4\E\int_{T-4\tau/3}^t\int_{B_{4R/3}}(\xi \eta_i \varphi_i )^2 \mathrm{d}x\mathrm{d}s +2\E\int_{T-4\tau/3}^t\int_{B_{4R/3}} (a \eta \xi \varphi)^2  \mathrm{d}x\mathrm{d}s\\ &\quad +\E\int_{T-4\tau/3}^t\int_{B_{4R/3}} (\eta \xi \varphi_{ii})^2 \mathrm{d}x\mathrm{d}s+2\E\int_{T-4\tau/3}^t\int_{B_{4R/3}} (\xi \eta b_i \varphi )^2 +(\xi \eta b \varphi_i )^2 \mathrm{d}x \mathrm{d}s.
	\end{split}
\end{equation}
This, along with  (\ref{2.61111})-(\ref{2.61114}), implies that
\begin{equation}\label{1.6}
	\begin{split}
		\displaystyle{\sup_{t\in[T-\tau,T]}}\E\int_{B_{R}}| \varphi_i (x,t)|^{2}\mathrm{d}x&\leq
		C\left(\|a\|_{\infty}^2+\|b\|_{\infty}^2\right)\E \int^{T}_{T-4\tau/3}\int_{B_{4R/3}}\varphi^2\mathrm{d}x\mathrm{d}s\\&\quad+C\left(\tau^{-1}+R^{-2}+\|b\|_{\infty}^2\right)\E \int^{T}_{T-4\tau/3}\int_{B_{4R/3}}\varphi^2_i \mathrm{d}x\mathrm{d}s\\
		&\leq C\left(\|a\|_{\infty}^2+\|b\|_{\infty}^2\right)\E \int^{T}_{T-4\tau/3}\int_{B_{4R/3}}\varphi^2\mathrm{d}x\mathrm{d}s\\&\quad+C\left(\tau^{-1}+R^{-2}+\|b\|_{\infty}^2\right)\E \int^{T}_{T-4\tau/3}\int_{B_{4R/3}}|\nabla \varphi|^2 \mathrm{d}x\mathrm{d}s.
	\end{split}
\end{equation}

According to (\ref{1.2}) of Lemma \ref{lemma-1.1} (where $r, R, \tau_{1}$ and $\tau_{2}$ are replaced by
$4R/3, 2R, 4\tau/3$ and $2\tau$, respectively), it is clear that
$$\E\int_{T-4\tau/3}^{T}\int_{B_{4R/3}}| \nabla\varphi|^{2}\mathrm{d}x\mathrm{d}t\leq C\left(\tau^{-1}+R^{-2}+\|a\|_{\infty}+\|b\|_{\infty}^2\right)\E\int_{T-2\tau}^{T}\int_{B_{2R}}\varphi^{2}\mathrm{d}x\mathrm{d}t.$$ 
This, along with (\ref{1.6}), implies that
\begin{equation*}
	\begin{split}
		\displaystyle{\sup_{t\in[T-\tau,T]}}\E\int_{B_{R}}| \varphi_i (x,t)|^{2}\mathrm{d}x&\leq
		C\left(\tau^{-2}+R^{-4}+\|a\|_{\infty}^2 +\|b\|_{\infty}^4\right)\E\int^{T}_{T-4\tau/3}\int_{B_{2R}}\varphi^2\mathrm{d}x\mathrm{d}s.
	\end{split}
\end{equation*}
Hence, (\ref{1.3}) follows from the last inequality by summing in $ i=1,...,n $.
\qed



\begin{thebibliography}{00}
\bibitem{apraiz2014observability}
J. Apraiz, L. Escauriaza, G. Wang, and C. Zhang. Observability inequalities and measurable sets. J. Eur. Math. Soc., 16 (2014), pp. 2433–2475.

\bibitem{Barbu03}
V. Barbu, A. R\u{a}\c{s}canu, and G. Tessitore. Carleman estimates and controllability of linear stochastic heat equations. Appl. Math. Optim., 47 (2003), pp. 97-120.

\bibitem{duan2020observability}
Y. Duan, L. Wang, and C. Zhang. Observability inequalities for the heat equation with bounded potentials on the whole space. SIAM J. Control Optim., 58 (2020), pp. 1939–1960.

\bibitem{escauriaza2006doubling}
L. Escauriaza. Carleman inequalities and the heat operator. Duke Math. J., 104 (2000), pp. 113-27.

\bibitem{escauriaza2000carleman}
L. Escauriaza and L. Vega. Carleman inequalities and the heat operator. II. Indiana Univ. Math. J., 50 (2001), pp. 1149–1169.

\bibitem{escauriaza2001carleman}
L. Escauriaza, F. J. Fern\'andez, and S. Vessella. Doubling properties of caloric functions. Appl. Anal., 85 (2006), pp. 205–223.

\bibitem{fernandez2020hardy}
A. Fern\'andez-Bertolin and J. Zhong. Hardy’s uncertainty principle and unique continuation property for stochastic heat equations. ESAIM Control Optim. Calc. Var., 26 (2020).

\bibitem{hadamard1923lectures}
J. Hadamard. Lectures on Cauchy’s Problem in Linear Partial Differential Equations. Dover Publications, New York, 1953.

\bibitem{hernandez23}
V. Hern\'{a}ndez-Santamar\'{\i}a, K. Le Balc'h and L. Peralta. Global null-controllability for stochastic semilinear parabolic equations. Ann. Inst. H. Poincar\'{e} C Anal. Non Lin\'{e}aire, 40 (2023), pp. 1415-1455. 

\bibitem{lavrent_ev1986ill}
M. M. Lavrentev, V. G. Romanov, and S. P. Shishat$\cdot$ski\u{\i}. Ill-Posed Problems of Mathematical Physics and Analysis. Translations of Mathematical Monographs. American Mathematical Society, Providence, RI, 1986.

\bibitem{li2013quantitative}
H. Li and Q. L\"u. A quantitative boundary unique continuation for stochastic parabolic equations.
J.	Math. Anal. Appl., 402 (2013), pp. 518–526.

\bibitem{LiYong}
X. J. Li and J. M. Yong. Optimal Control Theory for Infinite-Dimensional Systems. Systems \& Control: Foundations \& Applications. Birkh\"auser Boston, Inc., Boston, MA, 1995.

\bibitem{liao2023stability}
Z. Liao and Q. L\"u. Stability estimate for an inverse stochastic parabolic problem of determining unknown time-varying boundary, arXiv: 2309.11423, preprint.	

\bibitem{lin1990uniqueness}
F. H. Lin. A uniqueness theorem for parabolic equations. Comm. Pure Appl. Math., 43 (1990), pp. 127–136.

\bibitem{lu2011some}
Q. L\"u. Some results on the controllability of forward stochastic heat equations with control on the drift. J. Funct. Anal., 260 (2011), pp. 832–851.

\bibitem{Lu_2012}
Q. L\"u. Carleman estimate for stochastic parabolic equations and inverse stochastic parabolic problems. Inverse Problems, 28 (2012).

\bibitem{lu2015unique}
Q. L\"u and Z. Yin. Unique continuation for stochastic heat equations. ESAIM Control Optim. Calc. Var., 21 (2015).

\bibitem{LuYin2020}
Q. L\"u and Z. Yin. Local state observation for stochastic hyperbolic equations. ESAIM Control Optim. Calc. Var., 26 (2020).

\bibitem{lv2021mathematical}
Q. L\"u and X. Zhang. Mathematical Control Theory for Stochastic Partial Differential Equations. Probability Theory and Stochastic Modelling. Springer, Cham, 2021.

\bibitem{phung2010quantitative}
K. D. Phung and G. Wang. Quantitative unique continuation for the semilinear heat equation in a convex domain. J. Funct. Anal., 259 (2010), pp. 1230–1247.

\bibitem{phung2013observability}
K. D. Phung and G. Wang. An observability estimate for parabolic equations from a measurable set in time and its applications. J. Eur. Math. Soc., 15 (2013), pp. 681–703.

\bibitem{phung2014bang}
K. D. Phung, L. Wang, and C. Zhang. Bang-bang property for time optimal control of semilinear heat equation. Ann. Inst. H. Poincar\'{e} C Anal. Non Lin\'{e}aire, 31 (2014), pp. 477–499.

\bibitem{poon1996unique}
C.-C. Poon. Unique continuation for parabolic equations. Comm. Partial Differential Equations, 21 (1996), pp. 521–539.

\bibitem{tang2009null}
S. Tang and X. Zhang. Null controllability for forward and backward stochastic parabolic equations. SIAM J. Control Optim., 48 (2009), pp. 2191–2216.

\bibitem{WANG2019144}
G. Wang, M. Wang, C. Zhang, and Y. Zhang. Observable set, observability, interpolation inequality and spectral inequality for the heat equation in $ \mathbb{R}^n $. J. Math. Pures Appl., 126 (2019), pp. 144–194.

\bibitem{refId0}
L. Wang and C. Zhang. A uniform bound on costs of controlling semilinear heat equations on a sequence of increasing domains and its application. ESAIM Control Optim. Calc. Var., 28 (2022).

\bibitem{WangMingZhangcan2023}
M. Wang and C. Zhang. Analyticity and observability for fractional order parabolic equations in the whole space. ESAIM Control Optim. Calc. Var., 29 (2023).

\bibitem{yang2016observability}
D. Yang and J. Zhong. Observability inequality of backward stochastic heat equations for measurable sets and its applications. SIAM J. Control Optim., 54 (2016), pp. 1157–1175.

\bibitem{zhang2008unique}
X. Zhang. Unique continuation for stochastic parabolic equations. Differential Integral Equations, 21 (2008), pp. 81–93.

\bibitem{ZHANG2016389}
Y. Zhang. Unique continuation estimates for the Kolmogorov equation in the whole space. C. R. Math. Acad. Sci. Paris, 354 (2016), pp. 389–393.

\bibitem{zuazua2007controllability}
E. Zuazua. Controllability and observability of partial differential equations: Some results and open problems, in Handbook of Differential Equations: Evolutionary Equations. Vol. 3: 527–621. Elsevier Science, Amsterdam, 2007.

\bibitem{zuily1983uniqueness}
C. Zuily. Uniqueness and Non-Uniqueness in the Cauchy Problem. Progress in Mathematics. Birkh\"auser Boston, Inc., Boston, MA, 1983.

\end{thebibliography}
\end{document}